\newcommand{\beq}{\begin{equation}}
\newcommand{\eeq}{\end{equation}}
\newcommand{\ben}{\begin{eqnarray}}
\newcommand{\een}{\end{eqnarray}}
\newcommand{\beno}{\begin{eqnarray*}}
\newcommand{\eeno}{\end{eqnarray*}}
\newcommand{\R}{\mathbb{R}}
\newtheorem{thm}{Theorem}[section]
\newtheorem{defi}[thm]{Definition}
\newtheorem{lem}[thm]{Lemma}
\newtheorem{prop}[thm]{Proposition}
\newtheorem{coro}[thm]{Corollary}
\newtheorem{rmk}[thm]{Remark}
\begin{document}

\title[Stefan as vanishing viscosity limit]{A generalized one phase Stefan problem as a vanishing viscosity limit$^*$}
\author[K. Wang]{Kelei Wang$^\dag$}
\thanks{$^\dag$School of Mathematics and Statistics  \&
 Hubei Key Laboratory of Computational Science, Wuhan University, Wuhan, 430072 China.
{Email: wangkelei@whu.edu.cn}. }

\thanks{$*$  Research  supported
by NSF of China grants no. 11871381 and no. 11631011.}
\date{\today}

\begin{abstract}
We study the vanishing viscosity limit of a nonlinear diffusion equation
describing chemical reaction interface or the spatial segregation interface of competing species, where the diffusion rate for the negative part of the solution converges to zero. As in the standard one phase Stefan problem, we prove that the positive part of the solution converges uniformly to the solution of a generalized one phase Stefan problem. This information is  then employed to determine the limiting equation for the negative part, which is an ordinary differential equation.
\end{abstract}
\keywords{ Stefan problem, vanishing viscosity limit, nonlinear diffusion. }

\subjclass{ 35B40, 35R35. }

\maketitle
\renewcommand{\theequation}{\thesection.\arabic{equation}}
\setcounter{equation}{0}


\section{Introduction}\label{sec introduction}
\setcounter{equation}{0}

In this paper we study the convergence  as $\varepsilon\to0$  of solutions to the following nonlinear parabolic problem:
\begin{equation}\label{eqn}
  \partial_tu_\varepsilon=\Delta \alpha_\varepsilon(u_\varepsilon)+f(u_\varepsilon). 
\end{equation}
The function $\alpha_\varepsilon$ is
\begin{equation*}
\alpha_\varepsilon(u):=
  \begin{cases}
     u, & \mbox{if } u\geq 0 \\
    \varepsilon u, &  \mbox{if } u\leq 0.
  \end{cases}
\end{equation*}
The nonlinearity $f$ is assumed to be Lipschitz and satisfies $f(0)=0$.

This equation arises in   chemical reaction models (see Cannon and Hill \cite{Cannon-Hill}, Evans \cite{Evans1}) and the spatial segregated limit of competing systems for two species in ecology models, that is, the $k\to+\infty$ limit of the following system (here we omit intra-species terms):
\begin{equation}\label{competing system}
   \left\{
\begin{aligned}
& \partial_tu_1-d_1\Delta u_1=-ku_1u_2, \\
& \partial_tu_2-d_2\Delta u_2=-ku_1u_2,
\end{aligned}\right.
\end{equation}
see Evans \cite{Evans2}, Dancer et. al. \cite{Hilhorst2} and Crooks et. al. \cite{Dancer}. See  also \cite{Wang} for the case with more than two species.

For \eqref{eqn}, $u^+:=\max\{u,0\}$ and $u^-:=\max\{-u,0\}$ represent the density of two species, while the nodal set $\{u=0\}$ is the segregated interface between them. Regularity properties of the solution and of the interface have been studied by  Cannon and Hill in \cite{Cannon-Hill} and Tonegawa in \cite{Tonegawa}.

In this paper we are interested in the situation where the diffusion rate of one species is so small that negligible. For example, in the liquid-solid phase transition model, sometimes we make the ideal assumption that there is only heat diffusion inside the liquid phase and exchange of heat across the phase interface, but no heat diffusion inside the solid phase. For \eqref{eqn} this corresponds to letting $\varepsilon\to0$ in \eqref{eqn}, hence a kind of vanishing viscosity problem. We will show   the limit is a generalized one phase Stefan free boundary problem, whose weak formulation (in the distributional sense) is as follows (cf. Rodrigues \cite[Section 1.2]{Rodrigues} or Visintin \cite[Section 1.1]{Visintin}).
\begin{defi}
Given a domain $\Omega\subset\R^n$, a constant $T>0$, a nonnegative function $W\in L^\infty(\Omega)$ and a Lipschitz function $f$ on $\R$, a nonnegative function $u\in L^1(\Omega\times(0,T))$ is a weak solution of the generalized, nonlinear one phase Stefan free boundary problem associated to $f$ and $W$, if for any $\eta\in C_0^\infty(\Omega\times(0,T))$, we have
\begin{equation}\label{limiting eqn}
  \int_{0}^{T}\int_{\Omega} \left[\beta(u)\eta_t+ u \Delta \eta+f(u)\eta\right]dxdt=0.
\end{equation}
Here
 \[
   \beta(u)(x,t):=
  \begin{cases}
     u(x,t), & \mbox{if } u(x,t)> 0 \\
     -W(x), &  \mbox{otherwise}.
  \end{cases}
  \]
\end{defi}
If $W\equiv const.$ and $f\equiv 0$, this is the standard one phase Stefan problem. But for the limiting problem of \eqref{eqn}, $W$ is determined by an ordinary differential equation (the limiting  equation for $u_\varepsilon^-$, see \eqref{limit eqn in negative} for the precise statement) and is not a constant in general.

The nonlinear Stefan problem describing biological spreading has   been studied by Du and his collaborators in a series of works, see  \cite{Du-Guo} and the survey paper \cite{Du}, as well as \cite{Du2} where the Stefan free boundary condition is derived based on an ecology consideration. The derivation of a similar one phase Stefan problem from a system similar to \eqref{competing system} has been conducted by Hilhorst et. al. in a series of works \cite{Hilhorst3, Hilhorst4, Hilhorst5}, where they considered the spatial segregation limit $k\to+\infty$ of the system
\begin{equation}\label{competing system 2}
   \left\{
\begin{aligned}
& \partial_tu_1-d_1\Delta u_1=-ku_1u_2, \\
& \partial_tu_2=-ku_1u_2,
\end{aligned}\right.
\end{equation}
In the same spirit, a  two phase Stefan free boundary problem is derived from a modified system of \eqref{competing system}   by Hilhorst et. al. in \cite{Hilhorst,Hilhorst1}.

 Our result, combined with the ones from \cite{Hilhorst2} and  \cite{Wang}, gives a derivation of this Stefan problem in two steps: first \eqref{eqn} is derived from \eqref{competing system} as a spatial segregation limit, then the Stefan problem is derived from \eqref{eqn} as a vanishing viscosity limit. Note that \eqref{competing system 2} can also be viewed as a vanishing viscosity limit of \eqref{competing system}.
Therefore the derivation in \cite{Hilhorst3, Hilhorst4, Hilhorst5} differs from ours in the order of these two steps.

To conclude this introductory section, we give a formal derivation of \eqref{limiting eqn} from \eqref{eqn}, in the setting of classical solutions. (See Visintin \cite[Section 1.2]{Visintin} for the definition of classical solutions of \eqref{limiting eqn}. For \eqref{eqn}, a solution $u_\varepsilon$ is classical  if the interface $\{u_\varepsilon=0\}$ is a smooth hypersurface, see Tonegawa \cite{Tonegawa}.)

  Take a point $(x_0,t_0)\in\partial\{u>0\}$ and assume in a neighborhood of it the free boundary is a  smooth hypersurface, given by the graph $\{t=T(x)\}$. 
Take a hyperbolic scaling
\[\widetilde{u}_\varepsilon(x,t):=u_\varepsilon(x_0+\varepsilon x, t_0+\varepsilon t).\]
Then
\[\widetilde{u}_\varepsilon(x,t)\to \widetilde{u}(x,t):= A\left[1-e^{\frac{t+\xi\cdot x}{|\xi|^2}}\right] \quad \mbox{locally smoothly in } \{t<\xi\cdot x\}.\]
Here $\xi:=\nabla T(x_0)$ and
\[A:=  \lim_{t \to T(x_0), ~~ t<T(x_0)}\lim_{\varepsilon\to 0}u_\varepsilon(x_0,t).\]
In fact, $\widetilde{u}$ is understood as the solution of
\[
  \left\{
\begin{aligned}
& \partial_t\widetilde{u}-\Delta\widetilde{u}=0   \quad & \mbox{in } \{t<\xi\cdot x\},\\
& \widetilde{u}=0   \quad  &\mbox{on } \{t= \xi\cdot x\}.
\end{aligned}\right.
  \]
Note that we have
\[|\nabla \widetilde{u}|^2=A^2|\xi|^{-2}=-A\partial_t\widetilde{u}.\]
Combining this equation with the relations
\[|\nabla u_\varepsilon^+|^2=\varepsilon^2|\nabla u_\varepsilon^-|^2 \quad \mbox{and} \quad \partial_tu_\varepsilon^+=\partial_tu_\varepsilon^- \quad \mbox{on } \{u_\varepsilon=0\},\]
and letting $\varepsilon\to0$ we formally get
\[|\nabla u^+(x_0,t_0)|^2=|A| \partial_tu^+(x_0,t_0).\]
This is the classical Stefan free boundary condition  with a possibly non-constant thermal conductivity coefficient $A$.

\section{Setting and main result}\label{sec main result}
\setcounter{equation}{0}

We are only interested in the interior case. Hence we work in the following setting: suppose $u_{\varepsilon_i}$ is a sequence of solutions to \eqref{eqn} in $Q_1^+:=B_1\times(0,1)\subset\R^n\times\R$ with $\varepsilon_i\to0$, and there exists a constant $\Lambda<+\infty$ such that
  \begin{equation}\label{L infty bound}
 \|u_{\varepsilon_i}\|_{L^\infty(Q_1^+)}\leq\Lambda.
  \end{equation}
Since $L^\infty(Q_1^+)$ is the dual space of $L^1(Q_1^+)$, 
 after passing to a subsequence, we may assume $u_{\varepsilon_i}$ converges to a limit $u$, and $f(u_{\varepsilon_i})$ converges to $\bar{f}$, both $\ast$-weakly in $L^\infty(Q_1^+)$. Similarly, we will also assume $u_{\varepsilon_i}(0)$ converges $\ast$-weakly to a limit $u_0$ in $L^\infty(B_1)$.

The main result of this paper is
\begin{thm}\label{main result}
Under the above assumptions, we have
  \begin{itemize}
    \item [(i)] $u_{\varepsilon_i}^+$ converges to  $u^+$  in  $C_{loc}(Q_1^+)$, and $u_{\varepsilon_i}^-$ converges to $u^-$ $\ast$-weakly in $L^\infty(Q_1^+)$;
    \item [(ii)]$\Omega(t):=\{u(t)>0\}$ is open and increasing in $t$ in the sense that
      \[ \Omega(t_1)\subset\Omega(t_2), \quad \forall \ 0<t_1<t_2<1;\]
      in particular, there exists an upper semi-continuous function $T: B_1\mapsto [0,1]$ such that
\[\Omega=\left\{(x,t)\in Q_1^+: t>T(x)\right\};\]
      \item [(iii)] for a.e.  $0<s<t<1$,
\begin{equation}\label{limit eqn in negative}
u(x,t)=u(x,s)+\int_{s}^{t}\bar{f}(x,\tau)d\tau, \quad \mbox{for a.e. } \ \ x\in  B_1 \setminus \Omega(t).
\end{equation}
        \item [(iv)] in $Q_1^+$, $u^+$ is the weak solution of the generalized, nonlinear Stefan problem associated to the nonlinearity $f$ and
        \begin{equation}\label{2.1}
        W(x):=-u_0(x)-\int_{0}^{T(x)}\bar{f}(x,s)ds;
        \end{equation}
  \item  [(v)]  $\nabla u_{\varepsilon_i}^+$ converges to  $\nabla u^+$ strongly in    $L^2_{loc}(Q_1^+)$.
       \end{itemize}
\end{thm}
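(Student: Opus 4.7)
My plan is to treat the five parts in the listed order, exploiting the decomposition
\[
\partial_tu_\varepsilon=\Delta u_\varepsilon^+-\varepsilon\,\Delta u_\varepsilon^-+f(u_\varepsilon).
\]
Part (i) is the technical core. Testing \eqref{eqn} against $u_\varepsilon\phi^2$ for a space--time cutoff $\phi$ and using $f(0)=0$ produces the uniform energy bound $\int\phi^2\bigl(|\nabla u_\varepsilon^+|^2+\varepsilon|\nabla u_\varepsilon^-|^2\bigr)\le C$, so in particular $u_\varepsilon^+$ is bounded in $L^2_{\mathrm{loc}}H^1_{\mathrm{loc}}$. Since $u_\varepsilon^+$ solves the uniformly parabolic equation $\partial_tv-\Delta v=f(v)$ on the open set $\{u_\varepsilon>0\}$ and vanishes otherwise, a Caffarelli--Friedman / Caffarelli--Evans type continuity argument (the standard machinery behind the one-phase Stefan problem) should deliver a uniform H\"older modulus for $u_\varepsilon^+$ independent of $\varepsilon$, hence $C_{\mathrm{loc}}$ convergence along a subsequence. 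The limit is identified with $u^+$ using $u_\varepsilon^+\cdot u_\varepsilon^-\equiv 0$ and the weak-$*$ identity $u=\lim u_\varepsilon$. I expect uniform continuity across the vanishing free boundary to be the main technical obstacle.

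Once (i) is in hand, passing to the limit in the weak form of \eqref{eqn} --- using $\alpha_\varepsilon(u_\varepsilon)=u_\varepsilon^+-\varepsilon u_\varepsilon^-\to u^+$ locally uniformly, and $u_\varepsilon\to u$, $f(u_\varepsilon)\to\bar f$ weak-$*$ --- immediately produces the single distributional identity
\[
\int_{Q_1^+}\bigl(u\,\eta_t+u^+\Delta\eta+\bar f\,\eta\bigr)\,dx\,dt=0\qquad\forall\,\eta\in C_c^\infty(Q_1^+).
\]
Everything that follows is extracted from this identity. For (ii), monotonicity of $\Omega(t)$ is the rigorous counterpart of the formal Stefan condition $|\nabla u^+|^2=|A|\,\partial_tu^+\ge 0$ derived at the end of the introduction; I would either pass to the limit in a differential inequality for $u_\varepsilon^+$ or, at the level of the limit, exploit the monotonicity built into the enthalpy variable $\beta$. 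Setting $T(x):=\inf\{t:u(x,t)>0\}$, the openness of $\Omega$ then yields upper semi-continuity of $T$. For (iii), monotonicity places $\{x\}\times(0,t)$ inside the interior of $\{u^+=0\}$ whenever $t\le T(x)$; for test functions supported there, $u^+\Delta\eta$ vanishes and the identity collapses to $\partial_tu=\bar f$, so Fubini delivers \eqref{limit eqn in negative}.

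Part (iv) is a rearrangement. Given $\eta\in C_c^\infty(Q_1^+)$, split $\int(u\,\eta_t+\bar f\,\eta)$ into contributions from $\Omega$ and its complement. On $\Omega$, $u=u^+=\beta(u^+)$ and, by the strong convergence in (i), $\bar f=f(u)=f(u^+)$. On the complement, the key observation is that $g(x,t):=u(x,t)+W(x)$ satisfies $\partial_tg(x,\cdot)=\bar f(x,\cdot)$ and $g(x,T(x))=0$ thanks to (iii) and the definition \eqref{2.1} of $W$; integration by parts in $t$ for each fixed $x$ (using $\eta(x,0)=0$) gives $\int_0^{T(x)}\bigl(g\,\eta_t+\bar f\,\eta\bigr)\,dt=0$, so the complement contributes exactly $\int -W(x)\,\eta_t=\int\beta(u^+)\,\eta_t$, producing \eqref{limiting eqn}. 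Finally, for (v) the weak convergence $\nabla u_\varepsilon^+\rightharpoonup\nabla u^+$ in $L^2_{\mathrm{loc}}$ is immediate from the energy bound in (i); to upgrade to strong convergence I would run the standard energy identity, testing \eqref{eqn} against $u_\varepsilon\phi$ and passing to the limit with the help of (iv) (to rewrite the boundary-in-time term via $\beta$), obtaining $\limsup_\varepsilon\int\phi|\nabla u_\varepsilon^+|^2\le\int\phi|\nabla u^+|^2$, which combined with weak convergence gives strong $L^2_{\mathrm{loc}}$ convergence of gradients.
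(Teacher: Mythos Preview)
Your overall architecture matches the paper's, and parts (i) and (v) are essentially what the paper does. However, there is a genuine gap in your approach to (iii), and a related underspecification in (ii).

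For (iii), your assertion that ``monotonicity places $\{x\}\times(0,t)$ inside the interior of $\{u^+=0\}$ whenever $t\le T(x)$'' is not justified. The waiting time $T$ is only upper semi-continuous, so there may be points $y$ arbitrarily close to $x$ with $T(y)\ll T(x)$; for such $y$ one has $(y,s)\in\Omega$ even though $s<T(x)$, and hence $(x,s)$ need not lie in the interior of $\{u^+=0\}$. Your argument therefore only yields $\partial_tu=\bar f$ distributionally on the \emph{open} set $\mathrm{int}\bigl(Q_1^+\setminus\Omega\bigr)$, and without knowing that $\partial\Omega$ has zero Lebesgue measure (which the paper explicitly does not assert) this is strictly weaker than \eqref{limit eqn in negative}. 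The paper bypasses this obstacle by introducing the time-integrated variable
\[
w_{h,\varepsilon}(x,t):=\int_h^t \alpha_\varepsilon\bigl(u_\varepsilon(x,s)\bigr)\,ds,
\]
which satisfies $\Delta w_{h,\varepsilon}-\partial_tw_{h,\varepsilon}=g_{h,\varepsilon}$ with a uniformly bounded right-hand side. Standard parabolic $W^{2,p}$ estimates then give $w_h\in W^{2,1}_{p,\mathrm{loc}}$, so $\Delta w_h=\partial_tw_h=0$ a.e.\ on the level set $\{w_h=0\}$ by Stampacchia-type reasoning. Since one can show $\{w_h=0\}=\bigl[B_1\times(h,1)\bigr]\setminus\Omega$, this forces $g_h=0$ a.e.\ there, and unpacking the explicit formula for $g_h$ yields \eqref{limit eqn in negative} on the full complement of $\Omega$, not just its interior. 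Your part (iv) inherits the same gap: the key identity $g(x,T(x))=0$ you invoke is precisely the full-strength version of (iii) at the terminal time $T(x)$. The paper instead reads off \eqref{limiting eqn} directly from the limiting equation for $w_0$.

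For (ii), both of your suggested mechanisms are too vague to constitute a proof. Monotonicity of $\Omega(t)$ is not a formal consequence of the limiting distributional identity, nor does the enthalpy structure by itself deliver it here (cf.\ the paper's Remark in Section~\ref{sec variational inequality}). The paper establishes it via a separate ``expanding property'' lemma at the $\varepsilon$-level: if $u_\varepsilon\ge\lambda$ on $B_{2r/3}(x)\times\{t\}$ then $u_\varepsilon>0$ on $B_{r/4}(x)\times(t+r^2/8,t+T)$ for all small $\varepsilon$. The proof compares with an auxiliary solution whose negative part is controlled via the linear vanishing-viscosity heat equation, and again uses the $w$-variable to close the argument. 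This lemma is also what upgrades the backward-in-time continuity estimates of Caffarelli--Friedman type to full uniform continuity in (i).
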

\begin{rmk}
  \begin{itemize}
    \item The $L^\infty$ bound usually comes from global consideration, e.g. if there is a suitable initial-boundary value condition.
    \item It is possible that $\bar{f}\neq f(u)$,  which is a common phenomena about nonlinear functions of $\ast$-weakly convergent sequences. But we do have the corresponding weak-$\ast$ convergence from $u_{\varepsilon_i}^\pm$ to $u^\pm$.
    \item By \eqref{limit eqn in negative}, we also have
    \[W(x)=-\lim_{t\to T(x), ~~ t<T(x)}u(x,t)\geq 0 \quad \mbox{ a.e. in } ~ B_1.\]
    The last inequality follows from the fact that $u\leq 0$ in $\{t<T(x)\}$.
    \item When coupled with a suitable initial-boundary value condition, the solution to the limiting problem is unique and we do not need to pass to a subsequence of $\varepsilon\to0$ to choose $\ast$-weakly convergent sequences. The uniqueness  can be proved as in the standard one phase Stefan problem, where one only needs to note that because there is a coupling between $u^+$ and $u^-$ through the free boundary condition,  we need to first prove uniqueness of solutions to the ordinary differential equation satisfied by $u^-$. With this uniqueness result, it is also possible to determine directly   the limiting equation, without resorting to the analysis in Section \ref{sec uniform continuity} and Section \ref{sec completion of proof} of this paper. However, we expect the local interior analysis here could be useful in the study of entire solutions such as travelling waves for  \eqref{competing system} and the nonlinear Stefan problem, which have attracted the attention of many researchers (see e.g. \cite{Du}).
     \end{itemize}
\end{rmk}

The arguments to prove this theorem are mostly standard ones used in the one phase Stefan problem, although we also need to determine the limiting problem for the negative phase. We will consider two transformations of $u_\varepsilon$.
First let $\beta_\varepsilon$ be the inverse of $\alpha_\varepsilon$, that is,
\[
\beta_\varepsilon(v)=
\begin{cases}
  v, & \mbox{if } v\geq 0 \\
  v/\varepsilon, &  \mbox{if } v\leq 0 .
\end{cases}
\]
Then $v_\varepsilon:=\alpha_\varepsilon(u_\varepsilon)$ satisfies
\begin{equation}\label{eqn v}
  \partial_t\beta_\varepsilon(v_\varepsilon)=\Delta v_\varepsilon+f\left(\beta_\varepsilon(v_\varepsilon)\right).
\end{equation}
This is nothing else but  a standard approximation to the enthalpy formulation of one phase Stefan problem. However, in previous work about the one phase Stefan problem this approximation was mainly used to prove the existence of weak solutions, but here we are interested  in the approximation process itself, which, as explained in Section \ref{sec introduction}, is related to the problem \eqref{eqn}.

In \eqref{eqn v}, the information  of $u_\varepsilon^-$ is thrown away, and we can expect good a priori estimates for $v_\varepsilon$, see Section \ref{sec uniform Sobolev} for estimates in Sobolev spaces (which are similar to the one in Friedman \cite{Friedman}), and Section \ref{sec uniform continuity} for estimates about continuity (which is proved as in Caffarelli-Friedman \cite{Caffarelli-Friedman},  Caffarelli-Evans \cite{Caffarelli-Evans} or  DiBenedetto \cite{DiBenedetto}).

Next, we will also use a  variational inequality formulation (see Duvaut \cite{Duvaut}, Friedman and Kinderlehrer \cite{Friedman-K} and Rodrigues \cite{Rodrigues}), where we consider
\[w_\varepsilon(x,t):=\int_{0}^{t}v_\varepsilon(x,s)ds.\]
For $\varepsilon>0$, there is no variational inequality formulation for \eqref{eqn}, but  we will still adopt this name, because the limiting equation of $w_\varepsilon$ is similar to the variational inequality formulation for the one phase Stefan problem.

Important to us is the fact that the equation for $w_\varepsilon$ encodes some information about $u_\varepsilon^-$. This is because it involves integration from $0$ to $t$, which contains a part where $u_\varepsilon<0$. By this observation and noting the increasing property of $\Omega(t)$, the limiting equation for $u_\varepsilon^-$ is determined totally by the one for $w_\varepsilon$.

Finally, in any open set $U\subset Q_1^+\setminus \Omega$, \eqref{limit eqn in negative} says
\begin{equation}\label{1.1}
\partial_tu=\bar{f}  \quad \mbox{in the distributional sense}.
\end{equation}
It will be seen that we do not use any estimate on
\begin{equation}
\partial_tu_\varepsilon -\varepsilon\Delta u_\varepsilon =f(u_\varepsilon ) \quad \mbox{in }~~ U.
\end{equation}
The equation \eqref{limit eqn in negative} is derived solely from the one for $w_\varepsilon$. Note that \eqref{limit eqn in negative} could be stronger than \eqref{1.1} because we do not know if $\partial\Omega$ has zero Lebesgue measure. The defect of this approach is our failure of the determination of the form of $\bar{f}$. We believe this is not achievable in general,  unless  the convergence from $u_\varepsilon(0)$ to $u_0$ is better than the weak-$\ast$ convergence in $L^\infty(B_1)$.

{\bf Notations:} The following notations will be employed in this paper.
\begin{itemize}
   \item We will omit the subscript $i$ and just write $\varepsilon\to0$ for notational simplicity.
   \item An open ball is denoted by $B_r(x)$. If $x=0$, we simply write it as $B_r$.
  \item The  forward parabolic cylinder $Q_r^+(x,t):=B_r(x)\times(t,t+r^2)$, the backward parabolic cylinder $Q_r^-(x,t):=B_r(x)\times (t-r^2,t)$. Finally, the parabolic cylinder is $Q_r(x,t):=B_r(x)\times (t-r^2,t+r^2)$. If the center is $(0,0)$, it will be not written down explicitly.
  \item The notion of almost every property is always understood with respect to the standard Lebesgue measure.
  \item The space $V_2(Q_1^+)$ consists of functions satisfying
  \[\|u\|_{V_2(Q_1^+)}:=\sup_{t\in(0,1)}\|u(t)\|_{L^2(B_1)}+\int_{Q_1^+}|\nabla u(x,t)|^2dxdt<+\infty.\]
    \item We use $L$ to denote the Lipschitz constant of the nonlinearity $f$.
  \item We use $C$ to denote a constant which does not depend on $\varepsilon$. If we want to emphasize its dependence on some quantities, it is written as $C(\cdot)$.
\end{itemize}

The remaining part of the paper is organized as follows. In Section \ref{sec uniform Sobolev} we derive some basic uniform regularity of $u_\varepsilon$ and $v_\varepsilon$ in Sobolev spaces. In Section \ref{sec variational inequality} we introduce the parabolic variational inequality formulation and study the convergence in this framework. Section \ref{sec uniform continuity} is devoted to the proof of uniform convergence of $u_\varepsilon^+$ (equivalently, $v_\varepsilon$). Finally we prove Theorem \ref{main result} in Section \ref{sec completion of proof}.

\section{Uniform Sobolev regularity}\label{sec uniform Sobolev}
\setcounter{equation}{0}

In this section we show   uniform boundedness of $v_\varepsilon$ in some Sobolev spaces.

For any $\eta\in C_0^\infty(B_1)$, multiplying \eqref{eqn} by $u_\varepsilon\eta^2$ and integrating by parts we obtain
\begin{eqnarray}\label{L2 evolution}
  \frac{d}{dt}\frac{1}{2}\int_{B_1}u_\varepsilon^{2}\eta^2&=&-\int_{B_1}|\nabla (u_\varepsilon^+\eta)|^2+\int_{B_1}|u_\varepsilon^+|^2|\nabla\eta|^2\\
  &-&\varepsilon\int_{B_1}|\nabla (u_\varepsilon^-\eta)|^2 +\varepsilon\int_{B_1}|u_\varepsilon^-|^2|\nabla\eta|^2+\int_{B_1}f(u_\varepsilon)u_\varepsilon\eta^2. \nonumber
\end{eqnarray}
Substituting standard cut-off function into this equation we see that  for any $r\in(0,1)$, there exists a constant $C(r)$ such that
\begin{equation}\label{Sobolev on u}
\int_0^1\int_{B_r} \left[|\nabla u_\varepsilon^+|^2+|\partial_tu_\varepsilon^+|^2+\varepsilon\left(|\nabla u_\varepsilon^-|^2+|\partial_tu_\varepsilon^-|^2\right)\right] \leq C(r)\Lambda^2.
\end{equation}
Using $v_\varepsilon$ this is rewritten as
\begin{equation}\label{Sobolev on v}
\left\{ \begin{aligned}
 & \int_0^1\int_{B_r} \left( |\nabla v_\varepsilon^+|^2+|\partial_tv_\varepsilon^+|^2\right)\leq C(r)\Lambda^2,\\
 & \int_0^1\int_{B_r} \left( |\nabla v_\varepsilon^-|^2+|\partial_tv_\varepsilon^-|^2\right)\leq C(r)\Lambda^2\varepsilon.
                          \end{aligned} \right.
\end{equation}

Combining these estimates with the $L^\infty$ bound on $v_\varepsilon$, after passing to a subsequence, we may assume $v_\varepsilon\to u_1$ weakly in $V_2(Q_r^+)$ (for any $r\in(0,1)$) and strongly in $L^2_{loc}(Q_1^+)$. Since
\[\|u_\varepsilon^+-v_\varepsilon\|_{L^\infty(Q_1)}\leq \Lambda\varepsilon,\]
$u_\varepsilon^+$ also converges  strongly to $u_1$ in $L^2_{loc}(Q_1^+)$. After passing to a further subsequence, we also assume $u_\varepsilon^+$ and $v_\varepsilon$ converge a.e. to  $u_1$ in $Q_1^+$.

By these convergence and the weak-$\ast$ convergence of $u_\varepsilon$,
$u_\varepsilon^-$ converges $\ast$-weakly to $u_2:=u_1-u$ in $L^\infty_{loc}(Q_1)$.

\begin{lem}\label{lem 3.1}
We have
\[u_1=u^+, \quad  u_2=u^- \quad \mbox{ a.e. in } Q_1^+.\]
\end{lem}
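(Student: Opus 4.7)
The plan is to identify the limits $u_1,u_2$ with $u^+,u^-$ by showing that $u_1,u_2\geq 0$ a.e., that $u=u_1-u_2$, and that $u_1u_2=0$ a.e.; any decomposition with these three properties must coincide with the positive/negative part decomposition of $u$.

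The first two properties are essentially immediate. Since $u_\varepsilon^+\geq 0$ and $u_\varepsilon^+\to u_1$ a.e.\ in $Q_1^+$, we get $u_1\geq 0$ a.e. For $u_2$, the weak-$\ast$ limit of a nonnegative, uniformly bounded sequence is nonnegative (test against arbitrary nonnegative $L^1$ functions), so $u_2\geq 0$ a.e. The decomposition $u_\varepsilon=u_\varepsilon^+-u_\varepsilon^-$ passes to the weak-$\ast$ limit to give $u=u_1-u_2$, i.e.\ $u_2=u_1-u$, consistent with the definition already stated.

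The main point is to prove the disjoint-support condition $u_1u_2=0$ a.e. For this I use the pointwise identity $u_\varepsilon^+u_\varepsilon^-=0$ together with the \emph{a.e.}\ (not merely weak-$\ast$) convergence $u_\varepsilon^+\to u_1$. On the set $E:=\{u_1>0\}$, a.e.\ point $(x,t)$ satisfies $u_\varepsilon^+(x,t)>0$ for all sufficiently large $\varepsilon$, hence $u_\varepsilon^-(x,t)=0$ for all large $\varepsilon$. Thus $u_\varepsilon^-\chi_E\to 0$ a.e., and since $|u_\varepsilon^-\chi_E|\leq\Lambda$ the dominated convergence theorem gives $u_\varepsilon^-\chi_E\to 0$ in $L^1_{\mathrm{loc}}(Q_1^+)$, in particular weak-$\ast$ in $L^\infty_{\mathrm{loc}}$. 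Comparing with the known weak-$\ast$ limit $u_\varepsilon^-\to u_2$, we conclude $u_2\chi_E=0$, i.e.\ $u_2=0$ a.e.\ on $\{u_1>0\}$, which is precisely $u_1u_2=0$ a.e.

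Finally, with $u=u_1-u_2$, $u_1,u_2\geq 0$, and $u_1u_2=0$ a.e., one checks directly that on $\{u_1>0\}$ we have $u=u_1>0$ so $u^+=u_1$ and $u^-=0=u_2$, while on $\{u_1=0\}$ we have $u=-u_2\leq 0$ so $u^+=0=u_1$ and $u^-=u_2$. Hence $u_1=u^+$ and $u_2=u^-$ a.e.\ in $Q_1^+$. The only non-routine step is the disjoint-support argument in Step~3, and the key is that we have upgraded convergence of $u_\varepsilon^+$ to a.e.\ convergence in Section~\ref{sec uniform Sobolev}, which allows the pointwise relation $u_\varepsilon^+u_\varepsilon^-=0$ to be exploited via dominated convergence.
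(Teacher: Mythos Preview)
Your proof is correct and follows essentially the same route as the paper: use the a.e.\ convergence $u_\varepsilon^+\to u_1$ to see that on $\{u_1>0\}$ one eventually has $u_\varepsilon>0$, apply dominated convergence, and compare with the weak-$\ast$ limit to obtain $u_2=0$ (equivalently $u=u_1$) there; the identification $u_1=u^+$, $u_2=u^-$ then follows from $u=u_1-u_2$ with $u_1,u_2\ge0$ and $u_1u_2=0$. The only cosmetic slip is that you write ``for all sufficiently large $\varepsilon$'' where you mean sufficiently small $\varepsilon$ (the limit is $\varepsilon\to0$).
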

\begin{proof}
  For a.a. $(x,t)\in\{u_1>0\}$,
\[\lim_{\varepsilon\to 0}u_\varepsilon^+(x,t)=u_1(x,t)>0.\]
Hence for all $\varepsilon$ sufficiently small (depending on the point $(x,t)$), $u_\varepsilon(x,t)=u_\varepsilon^+(x,t)$, and it
converges to $u_1(x,t)$ as $\varepsilon\to0$. In other words, $u_\varepsilon\to u_1$ a.e. in $\{u_1>0\}$.

By  Lebesgue's dominated convergence theorem, for any $\varphi\in L^\infty(Q_1^+)$,
\[\lim_{\varepsilon\to0}\int_{\{u_1>0\}}u_\varepsilon\varphi=\int_{Q_1^+} u_1\varphi,\]
while by the definition of weak-$\ast$ convergence, we  also have
\[\lim_{\varepsilon\to0}\int_{Q_1^+}u_\varepsilon\chi_{\{u_1>0\}}\varphi=\int_{Q_1^+}u\chi_{\{u_1>0\}}\varphi.\]
Therefore $u=u_1$ and $u_2=u_1-u=0$ a.e. in $\{u_1>0\}$. Then $u_1 u_2=0$ a.e. in $Q_1^+$.  The conclusion follows by noting that $u=u_1-u_2$ and both $u_1$ and $u_2$ are nonnegative functions.
\end{proof}


\section{Parabolic variational inequality}\label{sec variational inequality}
\setcounter{equation}{0}
Take an arbitrary $h\in[0,1)$. If $h>0$, after passing to a subsequence, assume $u_\varepsilon(h)$ converges to a limit $u_h$ $\ast$-weakly in $L^\infty(B_1)$.

For $(x,t)\in B_1\times[h,1]$, define
\[w_{h,\varepsilon}(x,t):=\int_{h}^{t}v_\varepsilon(x,s)ds.\]
Direct calculation using \eqref{eqn v} gives
\begin{equation}\label{eqn w}
\Delta w_{h,\varepsilon}-\partial_t w_{h,\varepsilon}=g_{h,\varepsilon},
\end{equation}
where
\begin{eqnarray}\label{form of g}
  g_{h,\varepsilon}(x,t)&:=& \beta_\varepsilon(v_\varepsilon(x,t))-v_\varepsilon(x,t)-\beta_\varepsilon(v_\varepsilon(x,h))
-\int_{h}^{t}f\left(\beta_\varepsilon(v_\varepsilon)(x,s)\right)ds   \nonumber \\
 &=&  -\left(1-\varepsilon\right)u_\varepsilon(x,t)^--u_\varepsilon(x,h)-\int_{h}^{t}f\left(u_\varepsilon(x,s)\right)ds.
\end{eqnarray}

Because $u_\varepsilon$ and $v_\varepsilon$ are uniformly bounded in $L^\infty(Q_1)$, there exists a constant $C$ such that
\[\|g_{h,\varepsilon}\|_{L^\infty(B_1\times(h,1))}\leq C.\]
By definition,
\[ -\Lambda\varepsilon\leq w_{h,\varepsilon} \leq \Lambda, \quad \mbox{in } B_1\times(h,1).\]
Then standard $W^{2,p}$ estimates imply that both $\nabla^2 w_{h,\varepsilon}$ and $\partial_tw_{h,\varepsilon}$
are uniformly bounded in $L^p_{loc}(B_1\times(h,1))$, for any $p<+\infty$. By Sobolev embedding theorems, $w_{h,\varepsilon}$ are uniformly bounded in $C^{1+\alpha,\frac{1+\alpha}{2}}_{loc}(B_1\times[h,1))$  for any $\alpha\in(0,1)$.

 After passing to a subsequence of $\varepsilon\to0$, we may assume
\begin{itemize}
  \item  $w_{h,\varepsilon}$ converges to $w_h$ in $C^{1+\alpha,\frac{1+\alpha}{2}}_{loc}(B_1\times[h,1))$  for any $\alpha\in(0,1)$;
  \item  $\nabla^2w_{h,\varepsilon}$ and $\partial_t w_{h,\varepsilon}$ converge to $\nabla^2w_h$ and $\partial_t w_h$ respectively, with respect to the weak topology in $L^p_{loc}(B_1\times(h,1))$ for any $p<+\infty$;
  \item $g_{h,\varepsilon}$ converges $\ast$-weakly in $L^\infty(B_1\times(h,1))$ to
  \begin{equation}\label{representation of g}
   g_h(x,t):=-u(x,t)^--u_h(x)-\int_{h}^{t}\bar{f}(x,s)ds.
  \end{equation}
\end{itemize}
By the convergence of $v_\varepsilon$ established in Section \ref{sec uniform Sobolev}, $\partial_tw_h=u^+$ in the distributional sense.
Later we will show that all of these limits are independent of the choice of subsequences of $\varepsilon\to0$, and $g_h$ is independent of the choice of $h$.

Passing to the limit in \eqref{eqn w} we get
\begin{equation}\label{limit eqn w}
  \Delta w_h-\partial_tw_h=g_h, \quad \mbox{in } B_1\times(h,1).
\end{equation}
Concerning $w_h$, we observe the following facts:
\begin{enumerate}
  \item since $\partial_tw_h\geq 0$, $\{w_h(t)>0\}$ (which is an open set by the continuity of $w_h$) is increasing in $t$ in the sense that
      \[\{w_h(t_1)>0\}\subset \{w_h(t_2)>0\}, \quad \mbox{for any} ~~ h<t_1<t_2<1;\]
  \item $w_h\geq 0$, which follows by combining (1) with the fact that $w_h(x,h)\equiv 0$;
  \item $g_h=0$ and $u^+=0$ a.e. in $\{w_h=0\}$, which is a consequence of the fact that $w_h\in W^{2,1}_{p,loc}(B_1\times(h,1))$.
\end{enumerate}
\begin{rmk}\label{rmk 4.1}
   Unlike the standard approximation to the parabolic variational inequality formulation of the one phase Stefan problem (such as the one used in \cite[Section 2]{Friedman-K}), where basically one considers an approximate equation 
   \[\Delta\widetilde{w}_\varepsilon-\partial_t\widetilde{w}_\varepsilon+\beta_\varepsilon\left(\widetilde{w}_\varepsilon-\varepsilon\right)=0,\]
       at this stage we cannot use  Eqns. \eqref{eqn w}-\eqref{limit eqn w} to prove that $u>0$ a.e. in $\{w_h>0\}$, by noting that  after simplification, \eqref{eqn w} reads as
  \[\Delta w_{h,\varepsilon}-\beta_\varepsilon(\partial_tw_{h,\varepsilon})=u_\varepsilon(x,h).\]

In fact,   by \eqref{representation of g},  even if $f=0$ (hence $\bar{f}=0$), it is not clear if $\partial_tg_h=0$ in $\{w_h>0\}$ (in the distributional sense). Although this is indeed the case, we need to first prove that $u^-=0$ in $\{w_h>0\}$  (see Section \ref{sec completion of proof}).
\end{rmk}

\section{Uniform convergence}\label{sec uniform continuity}
\setcounter{equation}{0}

In this section we prove the uniform convergence of $u_\varepsilon^+$ to $u^+$. This is in fact a direct consequence of results in
\cite{Caffarelli-Friedman, Caffarelli-Evans, DiBenedetto}. In particular, the next four lemmas, Lemma \ref{lem De Giorgi iteration}-Lemma \ref{lem Harnack}, are just suitable adaption of corresponding results in \cite{Caffarelli-Friedman, Caffarelli-Evans, DiBenedetto} to our specific setting. These four lemmas are used to prove the continuity of $v_\varepsilon$ in backward parabolic cylinders. Continuity in forward parabolic cylinders can also be proved following the methods in \cite{Caffarelli-Friedman, Caffarelli-Evans, DiBenedetto}. However, here we present a direct proof (see Proposition \ref{prop uniform continuity}), by establishing an almost-monotonicity in time property for $u_\varepsilon$ (see Lemma \ref{lem uniform continuity from above}). This lemma will also be used in the next section to prove $\{u>0\}=\{w_h>0\}$.

In the following we denote, for any $r>0$,
\[Q_r^\ast:=B_r\times \left(-\frac{r^2}{4n},0\right).\]

\begin{lem}\label{lem De Giorgi iteration}
For any $M>0$, there exists a constant  $\sigma(M)>0$ so that the following holds. Suppose $v_\varepsilon$ is a continuous solution of \eqref{eqn v} (with a possibly different nonlinearity $f$, but its Lipschitz constant is still bounded by $L$) in $Q_1^-$ satisfying $-M \varepsilon\leq v_\varepsilon\leq 1$ and
\begin{equation}\label{L2 small}
 \frac{1}{|Q_1^\ast|}\int_{Q_1^\ast}\left(1-v_\varepsilon\right)\leq \sigma.
\end{equation}
Then
\[ v_\varepsilon\geq \frac{1}{2} \quad \mbox{in} \quad Q_{1/2}^\ast.\]
\end{lem}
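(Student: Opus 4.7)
The plan is to run De Giorgi's iteration on the truncations $(k-v_\varepsilon)^+$ over a nested sequence of backward cylinders shrinking to $Q_{1/2}^\ast$, with the cutoff levels $k$ approaching $1/2$ from above. Since $v_\varepsilon\le 1$, the hypothesis \eqref{L2 small} says that $(1-v_\varepsilon)^+=1-v_\varepsilon$ has small $L^1$-average on $Q_1^\ast$; the De Giorgi scheme will then upgrade this to an $L^\infty$ bound $v_\varepsilon\ge 1/2$ on the smaller cylinder.

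The first and main step is the Caccioppoli inequality. For each level $k\in(0,1)$, test \eqref{eqn v} against $-(k-v_\varepsilon)^+\eta^2$, where $\eta$ is a standard space-time cutoff. The time-derivative term produces $\partial_t B_k(v_\varepsilon)$, where
\[
B_k(v):=\int_v^k (k-s)\,\beta_\varepsilon'(s)\,ds\ge 0
\]
is the nonnegative convex primitive (vanishing at $v=k$); the Laplacian term integrated by parts yields $|\nabla(k-v_\varepsilon)^+|^2\eta^2$ modulo an absorbable cross term; the Lipschitz reaction term is controlled by $L(1+M\varepsilon)(k-v_\varepsilon)^+$. The result is a uniform bound of the form
\[
\sup_t\int B_k(v_\varepsilon)(\cdot,t)\eta^2\,dx+\iint|\nabla(k-v_\varepsilon)^+|^2\eta^2 \le C\iint\bigl[(k-v_\varepsilon)^+\bigr]^2\bigl(|\nabla\eta|^2+|\partial_t\eta|\bigr)+C\iint\chi_{\{v_\varepsilon<k\}}\eta^2.
\]
The only subtle point is the contribution of the set $\{v_\varepsilon<0\}$, where $\beta_\varepsilon'=1/\varepsilon$ makes $B_k$ look singular; however, the a priori bound $v_\varepsilon\ge -M\varepsilon$ gives $B_k(v_\varepsilon)\le kM+M^2\varepsilon/2\le C(M)$ uniformly, so this region contributes only a bounded additive term, and on $\{v_\varepsilon\ge 0\}$ one has the clean lower bound $B_k(v_\varepsilon)\ge\tfrac12\bigl((k-v_\varepsilon)^+\bigr)^2$. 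Therefore the time-term still controls $\sup_t\int[(k-v_\varepsilon)^+]^2\eta^2$ up to a universal constant.

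The second step is the iteration itself. Apply the parabolic Sobolev embedding $V_2\hookrightarrow L^{2(n+2)/n}$ to upgrade the energy estimate, choose a geometric sequence of radii $r_j:=\tfrac12(1+2^{-j})$ and levels $k_j:=\tfrac12(1+2^{-j})$, and set $A_j:=\iint_{Q_{r_j}^\ast}[(k_j-v_\varepsilon)^+]^2$. The Caccioppoli inequality, together with the Chebyshev bound $|\{v_\varepsilon<k_{j+1}\}\cap Q_{r_j}^\ast|\le 4^{j+1}A_j$ applied to the $L^{2(n+2)/n}$ interpolation, gives a recursion of the classical form $A_{j+1}\le C\,b^{\,j}A_j^{1+\delta}$ with $\delta=2/n$ and some $b>1$. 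A standard fact then implies $A_j\to 0$ provided $A_0\le\sigma$ is small enough, where $\sigma=\sigma(M,L,n)>0$; since $A_0\le C\iint_{Q_1^\ast}(1-v_\varepsilon)$ by the bound $v_\varepsilon\le 1$, this is exactly what \eqref{L2 small} provides. Sending $j\to\infty$ gives $(1/2-v_\varepsilon)^+\equiv 0$ on $Q_{1/2}^\ast$, i.e.\ $v_\varepsilon\ge 1/2$ there.

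The main obstacle is keeping all constants independent of $\varepsilon$ despite the factor $1/\varepsilon$ in $\beta_\varepsilon'$ on the negative phase. This is resolved by the scaling of the hypothesis $v_\varepsilon\ge -M\varepsilon$: the volume of $\{v_\varepsilon<0\}$ is irrelevant, but the maximal value of $B_k$ on that set is bounded uniformly, which is precisely what lets the contribution from the negative phase be absorbed into the De Giorgi recursion and produces a threshold $\sigma$ depending only on $M$ (and the Lipschitz constant $L$ of $f$, which is fixed).
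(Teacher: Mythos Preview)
Your proposal is correct and follows essentially the same route as the paper. Both arguments test \eqref{eqn v} against $(k-v_\varepsilon)^+\eta^2$ to obtain a Caccioppoli inequality, use the hypothesis $v_\varepsilon\ge -M\varepsilon$ to make the primitive $B_k=\mathcal{B}_{k,\varepsilon}$ comparable to $(k-v_\varepsilon)_+^2$ uniformly in $\varepsilon$ (this is exactly the paper's two-sided bound $\tfrac12(k-v_\varepsilon)_+^2\le \mathcal{B}_{k,\varepsilon}(v_\varepsilon)\le C(M)(k-v_\varepsilon)_+^2$, which is what your ``bounded additive term'' observation amounts to once you note $(k-v_\varepsilon)^+\ge k\ge 1/2$ on the negative set), and then run the standard De Giorgi recursion over the dyadic levels $k_m=\tfrac12+2^{-m}$ and radii $r_m=\tfrac12+2^{-m}$. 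The only cosmetic differences are that the paper iterates on the $L^1$ quantities $a_m=\int_{Q_{r_m}^\ast}(k_m-v_\varepsilon)_+$ while you iterate on the $L^2$ quantities $A_j$, and your bound on the reaction term should read $L\max(1,M)$ rather than $L(1+M\varepsilon)$, since $|u_\varepsilon|=|\beta_\varepsilon(v_\varepsilon)|\le M$ on the negative set; neither point affects the argument.
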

\begin{proof}
The proof is divided into two steps.

{\bf Step 1. Caccioppoli inequality.}
Given $-1/(4n)<t_1<t_2<0$ and $k\in(0,1)$, for any $\eta\in C_0^\infty(B_1\times (t_1,t_2))$ and $t\in(t_1,t_2)$, multiplying \eqref{eqn v} by $(k-v_\varepsilon)_+\eta^2$ and integrating in $B_1\times (t_1,t)$ leads to
\begin{eqnarray}\label{Caccioppoli 1}
   && \int_{t_1}^{t}\int_{B_1}|\nabla \left[(k-v_\varepsilon)_+\eta\right]|^2+\int_{B_1}\mathcal{B}_{k,\varepsilon}(v_\varepsilon(t))\eta^2 \\
  &=& \int_{t_1}^{t}\int_{B_1}\left[(k-v_\varepsilon)_+^2|\nabla\eta|^2+2\mathcal{B}_{k,\varepsilon}(v_\varepsilon(t))\eta \partial_t\eta- f(u_\varepsilon)(k-v_\varepsilon)_+\eta^2\right], \nonumber
\end{eqnarray}
where
\[
\mathcal{B}_{k,\varepsilon}(v):=
\begin{cases}
  0, & \mbox{if } v\geq k \\
  \frac{1}{2}(k-v)^2, & \mbox{if } 0\leq v \leq k \\
  \frac{k^2}{2}+\frac{1}{2\varepsilon}v^2-\frac{k}{\varepsilon}v, & \mbox{otherwise}.
\end{cases}
\]

Since $v_\varepsilon\geq -M \varepsilon$, there exists a constant $C(M)$ depending only on $M$ such that
\[\frac{1}{2}(k-v_\varepsilon)_+^2 \leq \mathcal{B}_{k,\varepsilon}(v_\varepsilon) \leq C(M)(k-v_\varepsilon)_+^2.\]
With this estimate \eqref{Caccioppoli 1} is transformed into
\begin{eqnarray}\label{Caccioppoli}
   && \int_{t_1}^{t}\int_{B_1}|\nabla \left[(k-v_\varepsilon)_+\eta\right]|^2+\int_{B_1}(k-v_\varepsilon(t))_+^2 \eta^2 \\
  &\leq &C(M) \int_{t_1}^{t}\int_{B_1}\left[(k-v_\varepsilon)_+^2\left(|\nabla\eta|^2+|\eta\partial_t\eta|\right)+(k-v_\varepsilon)_+\eta^2\right]. \nonumber
\end{eqnarray}

Since $t$ is arbitrary, by this inequality and Sobolev embedding theorem we get two constants $p>1$ and $C(M)$ such that
\begin{equation}\label{Sobolev}
\left(\int_{t_1}^{t_2}\int_{B_1} (k-v_\varepsilon)_+^{2p}|\eta|^{2p}\right)^{\frac{1}{p}}\leq C(M) \int_{t_1}^{t_2}\int_{B_1}(k-v_\varepsilon)_+\left[|\nabla\eta|^2+|\eta\partial_t\eta| +\eta^2\right].
\end{equation}

{\bf Step 2. De Giorgi iteration.} For any $m\geq 1$, set
\[k_m:=2^{-1}+2^{-m}, \quad r_m=2^{-1}+2^{-m}\]
and
\[a_m:=\int_{Q_{r_m}^\ast}(k_m-v_\varepsilon)_+.\]

Take a function $\eta_m\in C_0^\infty(Q_{r_m}^\ast)$ such that $\eta_m\equiv 1$ in $Q_{r_{m+1}}^\ast$, $0\leq \eta_m\leq 1$ and $|\nabla\eta_m|^2+|\partial_t\eta_m|\leq 16(r_m-r_{m+1})^{-2}$. Substituting $\eta_m$ into \eqref{Sobolev} leads to
\begin{equation}\label{iteration 1}
 \left(\int_{Q_{r_{m+1}}^\ast} (k_m-v_\varepsilon)_+^{2p}\right)^{\frac{1}{p}}\leq C(M)4^ma_m.
\end{equation}
In $\{(k_{m+1}-v_\varepsilon)_+\neq 0\}$, we have
\[(k_m-v_\varepsilon)_+\geq 2^{-m-1}.\]
Therefore
\[\int_{Q_{r_{m+1}}^\ast}(k_m-v_\varepsilon)_+^{2p}\geq 2^{-(m+1)(2p-1)} \int_{Q_{r_{m+1}}^\ast}(k_{m+1}-v_\varepsilon)_+.\]
Substituting this into \eqref{iteration 1} we get a constant $A(M)>1$ such that
\begin{equation}\label{iteration 2}
a_{m+1}\leq  A(M)^m a_m^p.
\end{equation}

By our assumption on $v_\varepsilon$, we have
\[a_1=\int_{Q_1^\ast}(1-v_\varepsilon)_+ \leq  \int_{Q_1^\ast}(1-v_\varepsilon)\leq  \sigma |Q_1^\ast|.\]
By \cite{Ladyzenskaja}, if $\sigma$ is small enough (depending only on $A(M)$), then $\lim_{m\to+\infty}a_m=0$. Hence
\[\int_{Q_{1/2}^\ast}\left(\frac{1}{2}-v_\varepsilon\right)_+=0.\]
Since $v_\varepsilon$ is continuous, this implies that $v_\varepsilon\geq 1/2$ in $Q_{1/2}^\ast$.
\end{proof}

\begin{lem}\label{lem decay}
For any  $\varepsilon,\sigma,M>0$, there exist two constants $\theta:=\theta(\varepsilon,\sigma,M)\in(0,1)$ and $\rho:=\rho(\varepsilon,\sigma,M)\in(0,1)$  so that the following holds. Suppose $v_\varepsilon$ is a continuous solution of \eqref{eqn v} in $Q_1^\ast$ (with a possibly different nonlinearity $f$, but its Lipschitz constant is still bounded by $L$)  satisfying $v_\varepsilon\geq -M \varepsilon$,  $\sup_{Q_1^\ast}v_\varepsilon=1$ and
\[\frac{1}{|Q_1^\ast|}\int_{Q_1^\ast}v_\varepsilon\leq 1-\sigma,\]
 then
\[ v_\varepsilon\leq \theta \quad \mbox{in} \quad  Q_{\rho}^\ast.\]
\end{lem}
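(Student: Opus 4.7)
The plan is to mimic the two-step structure of the proof of Lemma \ref{lem De Giorgi iteration} --- a Caccioppoli inequality followed by a De Giorgi iteration --- but applied to the upper truncation $(v_\varepsilon - k)_+$ with $k \in [0,1)$. The extra ingredient, compared with Lemma \ref{lem De Giorgi iteration}, is a measure-shrinking step that converts the weaker hypothesis $\int v_\varepsilon \le (1 - \sigma)|Q_1^\ast|$ into the smallness needed to seed the iteration.

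First I would convert the integral hypothesis into a measure bound. Splitting $\int_{Q_1^\ast} v_\varepsilon$ at the level $1 - \sigma/2$ and using $-M\varepsilon \le v_\varepsilon \le 1$ gives
\[ \bigl|\{v_\varepsilon \le 1 - \sigma/2\} \cap Q_1^\ast\bigr| \ge \frac{\sigma}{2(1 + M\varepsilon)}\,|Q_1^\ast|, \]
so $v_\varepsilon$ is separated from $1$ on a fixed fraction of $Q_1^\ast$. Next, I would derive a Caccioppoli inequality for $(v_\varepsilon - k)_+$ with $k\in[0,1)$. Since this truncation vanishes wherever $v_\varepsilon \le 0$, on its support $\beta_\varepsilon'(v_\varepsilon) = 1$, so multiplying \eqref{eqn v} by $(v_\varepsilon - k)_+\eta^2$ and integrating yields the clean estimate
\[ \sup_t \int_{B_1}(v_\varepsilon - k)_+^2\eta^2 + \iint |\nabla (v_\varepsilon - k)_+|^2\eta^2 \le C\iint \Bigl[(v_\varepsilon - k)_+^2(|\nabla\eta|^2 + |\eta\partial_t\eta|) + \chi_{\{v_\varepsilon > k\}}\eta^2\Bigr] \]
with $C$ absolute, together with the corresponding parabolic Sobolev inequality analogous to \eqref{Sobolev}. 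Note that neither $M$ nor $\varepsilon$ appears here, since the degenerate region $\{v_\varepsilon < 0\}$ lies outside the support of the truncation.

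The core technical step is then a DiBenedetto-style measure-shrinking lemma: running the Caccioppoli inequality between consecutive dyadic levels $1 - 2^{-j}$ and $1 - 2^{-j-1}$ and combining with a spatial Poincar\'e inequality applied on time slices together with the measure bound of the first step, one obtains, for any prescribed $\nu > 0$, an index $j_0 = j_0(\sigma, M, \varepsilon, \nu)$ with
\[ \bigl|\{v_\varepsilon > 1 - 2^{-j_0}\} \cap Q_{3/4}^\ast\bigr| \le \nu\,|Q_{3/4}^\ast|. \]
With $\nu$ chosen below the iteration threshold appearing at the end of the proof of Lemma \ref{lem De Giorgi iteration}, an analogous De Giorgi iteration applied to $(v_\varepsilon - k)_+$ with levels $k_m \searrow 1 - 2^{-j_0 - 1}$ and radii $r_m \searrow \rho$ drives the $L^1$ iterates to zero, giving $v_\varepsilon \le 1 - 2^{-j_0 - 1} =: \theta$ on $Q_\rho^\ast$. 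I expect the hard part to be the shrinking lemma: its constants inherit the $\varepsilon$-dependence of the initial measure bound through the factor $\sigma/(2(1+M\varepsilon))$, and this is what forces $\theta$ and $\rho$ in the conclusion to depend on $\varepsilon$.
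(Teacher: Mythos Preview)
Your approach is correct and is, in substance, a self-contained version of what the paper obtains by citation. After the same first step (turning the integral hypothesis into a measure bound $|\{v_\varepsilon<1-\sigma/2\}\cap Q_1^\ast|\ge\beta|Q_1^\ast|$), the paper simply notes that $v_\varepsilon^+$ satisfies the linear differential inequality $\partial_t v_\varepsilon^+-\Delta v_\varepsilon^+\le Lv_\varepsilon^+$ and invokes \cite[Lemma~3.1]{Caffarelli-Friedman} to conclude. Your Steps~2--4 (Caccioppoli for $(v_\varepsilon-k)_+$, a DiBenedetto-style shrinking lemma, and a final De Giorgi iteration) are exactly the machinery that proves such a cited lemma; your key remark that for $k\ge0$ the truncation is supported in $\{v_\varepsilon>0\}$, where $\beta_\varepsilon'\equiv1$, is precisely what underlies the paper's subsolution observation. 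So the difference is packaging rather than strategy: the paper buys brevity by citing, you buy self-containment by unpacking. One side remark: because your Caccioppoli and shrinking steps live entirely in the non-degenerate region, the only genuine $(\varepsilon,M)$-dependence enters through the initial measure fraction $\sigma/(2(1+M\varepsilon))$, which is consistent with how the paper later uses $\theta,\rho$ in Lemmas~\ref{lem uniform continuity from above} and \ref{lem Harnack} with $\varepsilon$ playing no essential role.
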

\begin{proof}
Take
\[\gamma:=1-\frac{\sigma}{2}, \quad \beta:=1-\frac{1-\sigma}{1-\sigma/2}.\]
Then it is readily verified that
\[|\{v_\varepsilon<\gamma\}\cap Q_1^\ast|\geq \beta|Q_1^\ast|.\]
Since
\[\partial_t v_\varepsilon^+-\Delta v_\varepsilon^+\leq Lv_\varepsilon^+,\]
the conclusion follows by applying \cite[Lemma 3.1]{Caffarelli-Friedman}.
\end{proof}

\begin{lem}\label{lem uniform continuity from above}
For any $\delta>0$, there exists an $r(\delta)<\delta$ (independent of $\varepsilon$) such that, for any $(x,t)\in\{v_\varepsilon\leq 0\}\cap Q_{1-\delta}^-(0,1)$,
\[\sup_{Q_{r(\delta)}^\ast(x,t)}v_\varepsilon\leq\delta.\]
\end{lem}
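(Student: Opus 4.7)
The plan is a geometric oscillation-decay iteration in the spirit of the Caffarelli--Friedman / DiBenedetto regularity theory, run at the given point $(x,t)\in\{v_\varepsilon\leq 0\}\cap Q_{1-\delta}^-(0,1)$. Set $r_k:=\rho^k\delta/2$ and $M_k:=\sup_{Q_{r_k}^\ast(x,t)}v_\varepsilon$, where $\rho\in(0,1)$ will be produced by Lemma \ref{lem decay}. The goal is to show that at each stage either $M_k\leq\delta$ (in which case we stop and set $r(\delta):=r_k$), or a fixed contraction $M_{k+1}\leq\theta M_k$ holds for some $\theta\in(0,1)$ depending only on $\delta,\Lambda,L$. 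Since $M_0\leq\Lambda$, only $k\lesssim\log(\Lambda/\delta)/\log(1/\theta)$ iterations are needed, so $r(\delta)<\delta$ depends only on $\delta,\Lambda,L$ and is independent of $\varepsilon$.

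For the contraction step, assume $M_k>\delta$ and rescale
\[\hat v(y,s):=M_k^{-1}\,v_\varepsilon(x+r_k y,\,t+r_k^2 s),\qquad (y,s)\in Q_1^\ast.\]
Using $\beta_\varepsilon(M_k\hat v)=M_k\beta_\varepsilon(\hat v)$, one checks that $\hat v$ solves an equation of the same form as \eqref{eqn v} with nonlinearity $\hat f(w):=r_k^2 M_k^{-1} f(M_k w)$, whose Lipschitz constant is at most $r_k^2 L\leq L$. By construction $\hat v\leq 1$ in $Q_1^\ast$ with $\sup_{Q_1^\ast}\hat v=1$; the a priori bound $v_\varepsilon\geq -\Lambda\varepsilon$ together with $M_k>\delta$ gives $\hat v\geq -(\Lambda/\delta)\varepsilon$; and the hypothesis $v_\varepsilon(x,t)\leq 0$ translates into $\hat v(0,0)\leq 0$.

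Now invoke the two previous lemmas as a dichotomy, with the auxiliary constant $M_\ast:=\Lambda/\delta$. If the average $|Q_1^\ast|^{-1}\int_{Q_1^\ast}(1-\hat v)$ were at most $\sigma(M_\ast)$, Lemma \ref{lem De Giorgi iteration} would give $\hat v\geq 1/2$ on $Q_{1/2}^\ast$; continuity of $v_\varepsilon$ and the fact that $(0,0)\in\overline{Q_{1/2}^\ast}$ (approach $(0,0)$ along times $s\to 0^-$) would then force $\hat v(0,0)\geq 1/2$, contradicting $\hat v(0,0)\leq 0$. Hence the reverse inequality $|Q_1^\ast|^{-1}\int_{Q_1^\ast}\hat v\leq 1-\sigma(M_\ast)$ must hold, and Lemma \ref{lem decay} (applied with parameters $\sigma(M_\ast)$ and $M_\ast$) produces $\theta,\rho\in(0,1)$ such that $\hat v\leq\theta$ in $Q_\rho^\ast$. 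Scaling back yields $M_{k+1}\leq\theta M_k$ and completes the contraction step.

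The main technical point, and the only place that requires care, is keeping all constants uniform in $\varepsilon$ throughout the iteration; this is exactly why we halt as soon as $M_k\leq\delta$. Maintaining $M_k>\delta$ at every iterated step caps the parameter $M_\ast$ that feeds Lemmas \ref{lem De Giorgi iteration} and \ref{lem decay} by $\Lambda/\delta$, so the constants $\sigma(M_\ast),\theta,\rho$ stay fixed along the iteration and depend only on $\delta,\Lambda,L$. One also needs to note that each cylinder $Q_{r_k}^\ast(x,t)$ stays inside $Q_1^+$, which follows automatically from $r_k\leq\delta/2$ and $(x,t)\in Q_{1-\delta}^-(0,1)$.
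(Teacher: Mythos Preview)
Your proposal is correct and follows essentially the same approach as the paper: a rescaled oscillation-decay iteration that uses Lemma~\ref{lem De Giorgi iteration} and Lemma~\ref{lem decay} as a dichotomy, with the lower bound $\hat v\geq -(\Lambda/\delta)\varepsilon$ guaranteed by keeping $M_k>\delta$. The only cosmetic difference is that the paper argues by contradiction (assuming a sequence $r_\varepsilon\to 0$ with $\sup_{Q_{r_\varepsilon}^\ast}v_\varepsilon\geq\delta$ and iterating down to it), whereas you run the iteration directly and bound the number of steps by $\log(\Lambda/\delta)/\log(1/\theta)$; both arguments use the same ingredients and the same continuity observation at $(0,0)\in\overline{Q_{1/2}^\ast}$ to rule out the ``De Giorgi'' alternative.
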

\begin{proof}
  Assume by the contrary, there exists a $\delta>0$, a sequence of points $(x_\varepsilon,t_\varepsilon)\in Q_{1-\delta}^-(0,1)$ satisfying $v_\varepsilon(x_\varepsilon,t_\varepsilon)\leq0$, and a sequence of $r_\varepsilon\to0$ such that
  \begin{equation}\label{absurd assumption}
    \sup_{Q_{r_\varepsilon}^\ast(x_\varepsilon,t_\varepsilon)}v_\varepsilon\geq \delta.
  \end{equation}

For any $r>0$, denote
\[M_\varepsilon(r):=\sup_{Q_r^\ast(x_\varepsilon,t_\varepsilon)}v_\varepsilon.\]

For any $\varepsilon>0$  and $r\in (r_\varepsilon, \delta)$, take the rescaling
\[v^r_\varepsilon(x,t):=\frac{1}{M_\varepsilon(r)}v_\varepsilon(x_\varepsilon+r x, t_\varepsilon+r^2 t).\]
It is a continuous solution of
\[\partial_t \beta_\varepsilon(v_\varepsilon^r)=\Delta v_\varepsilon^r-\frac{r^2}{M_\varepsilon(r)}f\left(M_\varepsilon(r) \beta_\varepsilon(v_\varepsilon^r)\right).\]
The Lipschitz constant of the nonlinearity is still bounded by $L$. Furthermore,  $v_\varepsilon^r\geq -M\varepsilon$ in $Q_1^\ast$ and $\sup_{Q_1^\ast}v_\varepsilon^r=1$, where $M:=\Lambda/\delta$.  Since $v^r_\varepsilon(0,0)\leq 0$, by Lemma \ref{lem De Giorgi iteration} we must have
\[\frac{1}{|Q_1^\ast|}\int_{Q_1^\ast}\left(1-v^r_\varepsilon\right)\geq \sigma(M).\]
Therefore Lemma \ref{lem decay} is applicable, which gives
\[\sup_{Q_{\rho}^\ast}v_\varepsilon^r\leq \theta:=\theta(\sigma(M),M)<1.\]
Rescaling back this is
\[ M_\varepsilon\left(\rho r\right)\leq \theta M_\varepsilon(r),  \quad \forall r\in (2r_\varepsilon, \delta/2).\]
An iteration of this estimate leads to
\[ M_\varepsilon(r_\varepsilon)\leq \theta^{\log (\frac{\delta}{8r_\varepsilon})/|\log \rho|}M_\varepsilon\left(\rho\delta\right).\]
Since $\lim_{\varepsilon\to0}r_\varepsilon=0$ and $M_\varepsilon(\rho\delta)\leq \Lambda$, we obtain
\[\lim_{\varepsilon\to0}M_\varepsilon(r_\varepsilon)=0,\]
which is a contradiction with \eqref{absurd assumption}.
\end{proof}

\begin{lem}\label{lem Harnack}
  For any $\lambda>0$, there exists a $\rho(\lambda)>0$ (independent of $\varepsilon$) so that the following holds. If $(x_\varepsilon,t_\varepsilon)\in Q_1^+$ and $v_\varepsilon(x_\varepsilon,t_\varepsilon)\geq\lambda$, then
  \[ v_\varepsilon\geq \frac{\lambda}{2} \quad \mbox{in } Q_{\rho(\lambda)}^\ast(x_\varepsilon,t_\varepsilon).\]
\end{lem}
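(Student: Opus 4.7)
The plan is to adapt the iterative scaling argument of Lemma~\ref{lem uniform continuity from above} to the present ``pointwise lower bound'' setting, exploiting the dichotomy encoded in Lemmas~\ref{lem De Giorgi iteration} and \ref{lem decay}.

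First I introduce $M_\varepsilon(r) := \sup_{Q_r^\ast(x_\varepsilon,t_\varepsilon)} v_\varepsilon$, defined for any $r>0$ small enough that $Q_r^\ast(x_\varepsilon,t_\varepsilon) \subset Q_1^+$. Continuity of $v_\varepsilon$ together with the fact that $(x_\varepsilon,t_\varepsilon)$ lies in the closure of every such backward cylinder forces $M_\varepsilon(r)\ge v_\varepsilon(x_\varepsilon,t_\varepsilon)\ge\lambda$ at every admissible scale. At each $r$ I would form the rescaling
\[
\tilde v_\varepsilon^r(x,t) := \frac{1}{M_\varepsilon(r)}\,v_\varepsilon\bigl(x_\varepsilon+rx,\,t_\varepsilon+r^2 t\bigr),\qquad (x,t)\in Q_1^\ast,
\]
which, exactly as in the proof of Lemma~\ref{lem uniform continuity from above}, solves \eqref{eqn v} with the same $\varepsilon$ and a rescaled nonlinearity of Lipschitz constant at most $L$, satisfies $\sup_{Q_1^\ast}\tilde v_\varepsilon^r=1$, and obeys $\tilde v_\varepsilon^r\ge -M\varepsilon$ with $M:=\Lambda/\lambda$. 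This places us in the hypotheses of both Lemma~\ref{lem De Giorgi iteration} and Lemma~\ref{lem decay}.

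At each scale I then run the following dichotomy. If $|Q_1^\ast|^{-1}\int_{Q_1^\ast}(1-\tilde v_\varepsilon^r)\le \sigma(M)$, then Lemma~\ref{lem De Giorgi iteration} gives $\tilde v_\varepsilon^r\ge 1/2$ on $Q_{1/2}^\ast$, which unscales to $v_\varepsilon\ge M_\varepsilon(r)/2\ge\lambda/2$ on $Q_{r/2}^\ast(x_\varepsilon,t_\varepsilon)$, as required. Otherwise the average of $\tilde v_\varepsilon^r$ on $Q_1^\ast$ is strictly less than $1-\sigma(M)$, and Lemma~\ref{lem decay}, applied with $\sigma:=\sigma(M)$, produces constants $\theta\in(0,1)$ and $\rho_0\in(0,1)$ depending only on $\sigma(M)$ and $M$ (hence only on $\lambda,\Lambda,L$) such that $\tilde v_\varepsilon^r\le\theta$ on $Q_{\rho_0}^\ast$; equivalently $M_\varepsilon(\rho_0 r)\le\theta\,M_\varepsilon(r)$.

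Finally I iterate: fix a scale $r_0>0$ for which $Q_{r_0}^\ast(x_\varepsilon,t_\varepsilon)\subset Q_1^+$, and choose the smallest integer $N$ with $\theta^N\Lambda<\lambda$. Were the second alternative to hold at each of the scales $\rho_0^k r_0$, $k=0,\dots,N-1$, telescoping would give
\[
M_\varepsilon(\rho_0^N r_0) \le \theta^N M_\varepsilon(r_0) \le \theta^N\Lambda < \lambda,
\]
contradicting the universal lower bound $M_\varepsilon\ge\lambda$. Hence the first alternative must trigger at some intermediate step, and we may take $\rho(\lambda):=\tfrac{1}{2}\rho_0^{\,N-1}r_0$, which depends only on $\lambda$, $\Lambda$ and $L$. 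The one point requiring care is that $\theta$ and $\rho_0$ of Lemma~\ref{lem decay} can indeed be chosen independent of $\varepsilon$: although its statement records an $\varepsilon$-dependence, its proof reduces to a Caffarelli--Friedman measure estimate for the nonnegative subsolution $v_\varepsilon^+$ of the linear inequality $\partial_t w-\Delta w\le Lw$, which does not see $\varepsilon$ at all.
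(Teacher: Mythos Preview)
Your argument is correct and follows essentially the same route as the paper: define $M_\varepsilon(r)$, rescale to a solution with supremum $1$ and lower bound $-M\varepsilon$ where $M=\Lambda/\lambda$, invoke the dichotomy from Lemmas~\ref{lem De Giorgi iteration} and~\ref{lem decay}, and then bound the number of decay steps using $\lambda\le M_\varepsilon(r)\le\Lambda$. Your explicit remark that $\theta$ and $\rho_0$ in Lemma~\ref{lem decay} are actually $\varepsilon$-independent (because its proof only uses the subsolution inequality for $v_\varepsilon^+$) is a useful clarification that the paper's own proof leaves implicit.
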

\begin{proof}
  Define $M_\varepsilon(r)$ and $v_\varepsilon^r$ as in the proof of the previous lemma. Note that $M_\varepsilon(r)\geq\lambda$ for any $r$. We still have $v_\varepsilon^r\geq -M\varepsilon$ in $Q_1^\ast$ and $\sup_{Q_1^\ast}v_\varepsilon^r=1$, with $M:=\Lambda/\lambda$.

 Lemma \ref{lem De Giorgi iteration}
says
\begin{equation}\label{alternative 1}
\frac{1}{|Q_1^\ast|}\int_{Q_1^\ast}\left(1-v_\varepsilon^r\right)\leq \sigma(M) \Longrightarrow \inf_{Q_{\rho r}^\ast(x_\varepsilon,t_\varepsilon)}v_\varepsilon\geq \frac{1}{2}M_\varepsilon(r),
\end{equation}
while Lemma  \ref{lem decay} says
  \begin{equation}\label{alternative 2}
\frac{1}{|Q_1^\ast|}\int_{Q_1^\ast}\left(1-v_\varepsilon^r\right)\geq \sigma(M) \Longrightarrow M_\varepsilon(\rho r)\leq \theta M_\varepsilon(r).
\end{equation}

Let $k_0$ be the minimal positive integer such that \eqref{alternative 1} holds for $r=\rho^{k_0}$. Then for any $k<k_0$, by \eqref{alternative 2} we have
\[M_\varepsilon(\rho^{k+1})\leq \theta M_\varepsilon(\rho^k).\]
Since for any $r$, $\lambda\leq M_\varepsilon(r)\leq \Lambda$, we must have
\[k_0\leq \frac{\log\left(\Lambda/\lambda\right)}{|\log\theta|}.\]
Take $\rho(\lambda):=\rho^{\frac{\log\left(\Lambda/\lambda\right)}{|\log\theta|}}$. By the definition of $k_0$, it satisfies the assumption in \eqref{alternative 1}. Because $M_\varepsilon(\rho^{k_0})\geq \lambda$, we get
\[
  \inf_{Q_{\rho(\lambda)}^\ast(x_\varepsilon,t_\varepsilon)}v_\varepsilon \geq  \inf_{Q_{\rho^{k_0}}^\ast(x_\varepsilon,t_\varepsilon)}v_\varepsilon
    \geq \frac{1}{2}M_\varepsilon(\rho^{k_0})\geq\frac{\lambda}{2}. \qedhere
\]
\end{proof}

\begin{rmk}
  With more work it is possible to obtain a modulus of continuity for $v_\varepsilon$, which holds uniformly in $\varepsilon$. We do not need this and a qualitative result is sufficient for our purpose.
\end{rmk}

The following lemma shows that for solutions of \eqref{eqn}, the positivity set is almost increasing in time. This will be passed to the $\varepsilon\to0$ limit as a real monotonicity property.
\begin{lem}\label{lem expanding property}
Given $\lambda>0$ and a cylinder $Q^T:=B_r(x)\times(t,t+T)$, suppose $u_\varepsilon$ is a continuous solution of \eqref{eqn} in this cylinder, satisfying
\begin{itemize}
  \item  $u_\varepsilon\geq -\Lambda $ in $Q^T$;
  \item  $u_\varepsilon\geq \lambda$ in $B_{2r/3}(x)\times\{t\}$.
\end{itemize}
If $\varepsilon$ is sufficiently small, then $u_\varepsilon>0$ in $B_{r/4}(x)\times (t+r^2/8,t+T)$.
\end{lem}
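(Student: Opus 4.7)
The plan is to prove the lemma by a parabolic comparison argument: construct a barrier $\phi$ from the classical heat equation that is strictly positive on the target cylinder, and show that $v_\varepsilon := \alpha_\varepsilon(u_\varepsilon)$ stays above $\phi$ up to an error of order $\varepsilon$. Since $u_\varepsilon>0$ iff $v_\varepsilon>0$, this will yield the claim. Concretely, I would let $\phi^0$ be the solution of the linear problem $\partial_s\phi^0-\Delta\phi^0+L\phi^0=0$ in $B_{3r/4}(x)\times(t,t+T)$, with zero Dirichlet data on $\partial B_{3r/4}(x)\times(t,t+T)$ and initial datum $\phi^0(\cdot,t)=\lambda\chi_{B_{2r/3}(x)}$. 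The strong maximum principle and standard interior parabolic regularity give $0\le\phi^0\le\lambda$ and $\phi^0\ge c_0>0$ uniformly on the target cylinder $B_{r/4}(x)\times[t+r^2/8,t+T]$, for some constant $c_0=c_0(r,\lambda,L,T)$. Setting $\phi:=\phi^0-\Lambda\varepsilon$, the inequality $\phi\le v_\varepsilon$ is immediate on the parabolic boundary of $B_{3r/4}(x)\times(t,t+T)$: on $B_{2r/3}(x)\times\{t\}$ from $v_\varepsilon=u_\varepsilon\ge\lambda$, and on the rest of the parabolic boundary from $v_\varepsilon\ge -\Lambda\varepsilon=\phi$.

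Next, I would show $v_\varepsilon\ge\phi-C\varepsilon$ in the interior via a maximum principle. Suppose $g:=v_\varepsilon-\phi$ attains a negative minimum $M<0$ at an interior point $(y^*,s^*)$. If $v_\varepsilon(y^*,s^*)>0$, then in a neighborhood $v_\varepsilon$ satisfies $\partial_s v_\varepsilon-\Delta v_\varepsilon=f(v_\varepsilon)$ with $f(v)+Lv\ge0$ for $v\ge0$, giving $\partial_s g-\Delta g+Lg\ge L\Lambda\varepsilon$ (using the equation of $\phi$); the minimum conditions $\partial_s g\le0$, $\Delta g\ge0$ then force $LM\ge L\Lambda\varepsilon$, contradicting $M<0$. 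If instead $v_\varepsilon(y^*,s^*)\le0$, the diffusion degenerates and a naive comparison fails; here I would invoke Lemma \ref{lem uniform continuity from above} at $(y^*,s^*)$: for any $\delta>0$, $v_\varepsilon\le\delta$ on the backward cylinder $Q_{r(\delta)}^\ast(y^*,s^*)$. Since $g\ge M$ globally, $\phi\le\delta-M$ on this cylinder, and continuity of $\phi$ (with $\delta\to0$) yields $\phi(y^*,s^*)\le|M|$. On the other hand $v_\varepsilon\ge-\Lambda\varepsilon$ and $g(y^*,s^*)=M$ give $\phi(y^*,s^*)\ge|M|-\Lambda\varepsilon$, confining $(y^*,s^*)$ to the thin layer $\{\phi^0\in[|M|,|M|+\Lambda\varepsilon]\}$. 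Plugging the minimum conditions into the degenerate equation $\partial_s v_\varepsilon=\varepsilon\Delta v_\varepsilon+\varepsilon f(u_\varepsilon)$ and comparing with the equation for $\phi$, the terms $\varepsilon f(u_\varepsilon)$ and $L\Lambda\varepsilon$ cancel to leading order, and the resulting constraint—together with the layer bound on $\phi^0$—bounds $|M|$ by a multiple of $\varepsilon$.

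Combining these estimates, $v_\varepsilon\ge\phi-C\varepsilon=\phi^0-(C+\Lambda)\varepsilon$ throughout $B_{3r/4}(x)\times(t,t+T)$, so on the target cylinder $v_\varepsilon\ge c_0-(C+\Lambda)\varepsilon>0$ for $\varepsilon$ sufficiently small, hence $u_\varepsilon>0$ there. The main obstacle is precisely the case $v_\varepsilon(y^*,s^*)\le0$: the switch of diffusion rate from $1$ to $\varepsilon$ across $\{v_\varepsilon=0\}$ prevents a textbook maximum principle, and Lemma \ref{lem uniform continuity from above} is the crucial ingredient, as it quantitatively caps how far $v_\varepsilon$ can rise in a backward cylinder based at a non-positivity point, which in turn bounds how deeply $g$ can dip below zero in terms of $\varepsilon$.
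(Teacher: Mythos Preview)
Your barrier argument has a genuine gap in the degenerate case $v_\varepsilon(y^*,s^*)\le 0$. Tracing through the minimum conditions there gives, after substituting $\partial_s v_\varepsilon=\varepsilon\Delta v_\varepsilon+\varepsilon f(u_\varepsilon)$ and $\partial_s\phi^0=\Delta\phi^0-L\phi^0$, only
\[
(1-\varepsilon)\,\Delta\phi^0(y^*,s^*)\ \ge\ L\phi^0(y^*,s^*)+\varepsilon f(u_\varepsilon)\ \ge\ L|M|-L\Lambda\varepsilon,
\]
which is a \emph{lower} bound on $\Delta\phi^0$ at one point. This does not bound $|M|$: the set $\{\Delta\phi^0\ge 0\}$ covers a large region on which $\phi^0$ attains all values between $0$ and $\lambda$, so the ``layer'' localization $\phi^0(y^*,s^*)\in[|M|,|M|+\Lambda\varepsilon]$ carries no information about the size of $|M|$. (Incidentally, that localization already follows from $v_\varepsilon(y^*,s^*)\le 0$ and $v_\varepsilon\ge -\Lambda\varepsilon$ alone; the appeal to Lemma~\ref{lem uniform continuity from above} adds nothing here.) The mismatch of diffusion coefficients --- $1$ for $\phi$ versus $\varepsilon$ for $v_\varepsilon$ in $\{v_\varepsilon<0\}$ --- is exactly what blocks a direct comparison, and your write-up does not overcome it. A secondary issue: even the positive case relies on pointwise second-order conditions at the minimum, but $v_\varepsilon$ is not $C^2$ across $\{v_\varepsilon=0\}$, so you would need a weak or viscosity formulation there.

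The paper circumvents this difficulty altogether. Instead of a linear barrier, it compares $u_\varepsilon$ with a solution $u_{\varepsilon,\ast}$ of the \emph{same} nonlinear equation \eqref{eqn}, with fixed smooth initial--boundary data $\varphi$ that equals $\lambda$ on an inner ball and $-\Lambda$ outside. The comparison principle for \eqref{eqn} gives $u_\varepsilon\ge u_{\varepsilon,\ast}$, so it remains to show $u_{\varepsilon,\ast}>0$ on the target cylinder. For this auxiliary solution the negative part is controlled directly: $u_{\varepsilon,\ast}^-$ is a subsolution to the linear operator $\partial_t-\varepsilon\Delta-L$, and the vanishing viscosity limit of the associated heat kernel representation forces $u_{\varepsilon,\ast}^-\to 0$ uniformly on the inner ball. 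Feeding this into the time-integrated equation \eqref{eqn w} shows that the limit $v_\ast$ solves the clean heat equation $\partial_t v_\ast-\Delta v_\ast=f(v_\ast)$ on the inner cylinder, whence Harnack gives a strict positive lower bound there. Finally the $L^1$ convergence of $v_{\varepsilon,\ast}$ to $v_\ast$, combined with Lemma~\ref{lem uniform continuity from above}, upgrades this to $v_{\varepsilon,\ast}>0$ for small $\varepsilon$. The point is that the paper never tries to run a maximum principle across $\{v_\varepsilon=0\}$; it uses the convergence machinery of Sections~\ref{sec uniform Sobolev}--\ref{sec variational inequality} to reduce to a clean limiting equation where Harnack applies.
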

\begin{proof}
Without loss of generality, assume $(x,t)=(0,0)$ and $r=1$. We divide the proof into three steps.

{\bf Step 1. Comparison functions.}
Take a  function $\varphi\in C^\infty(\R^n)$,  satisfying  $\varphi=\lambda$ in $B_{1/2}(0)$, $\varphi\leq\lambda$ everywhere and $\varphi\equiv -\Lambda$ outside $B_{2/3}(0)$. Let $u_{\varepsilon,\ast}$ be the solution of \eqref{eqn} in $Q^T$ with initial-boundary value $\varphi$. By the comparison principle for \eqref{eqn} (see \cite[Theorem 6]{Cannon-Hill}),
\begin{equation}\label{subsolution}
u_\varepsilon\geq u_{\varepsilon,\ast} \quad \mbox{in } Q^T.
\end{equation}
 Therefore we only need to show that $u_{\varepsilon,\ast}>0$ in $B_{1/4}\times(1/8,T)$.

Set $v_{\varepsilon,\ast}:=\alpha_\varepsilon(u_{\varepsilon,\ast})$ and
\[w_{\varepsilon,\ast}(x,t):=\int_{0}^{t}v_{\varepsilon,\ast}(x,s)ds.\]
As in Section \ref{sec uniform Sobolev} and Section \ref{sec variational inequality}, we assume $w_{\varepsilon,\ast}\to w_\ast$ and $v_{\varepsilon,\ast}\to v_\ast$ in the corresponding sense.

Since
\[\partial_tu_{\varepsilon,\ast}^--\varepsilon\Delta u_{\varepsilon,\ast}^-\leq Lu_{\varepsilon,\ast}^-,\]
by the comparison principle
\begin{equation}\label{comparison with linear}
 u_{\varepsilon,\ast}^- \leq e^{Lt}\widetilde{u}_\varepsilon \quad \mbox{in } Q^T.
\end{equation}
Here $\widetilde{u}_\varepsilon$ is the solution of
\begin{equation}\label{linear vanishing viscosity}
  \left\{ \begin{aligned}
 &\partial_t\widetilde{u}_\varepsilon=\varepsilon\Delta  \widetilde{u}_\varepsilon,  \quad &\text{in}~~Q^T,\\
 &\widetilde{u}_\varepsilon=\varphi^-,  \quad &\text{on} ~~\partial^pQ^T.
                          \end{aligned} \right.
\end{equation}

{\bf Step 2. A positive lower bound.}  Let $G(x,y,t)$ be the heat kernel for the standard heat operator $\partial_t-\Delta$ in $B_1$ with zero Dirichlet boundary condition. Then the heat kernel for $\partial_t-\varepsilon\Delta$ is $G(x,y,\varepsilon t)$. Hence we have the representation formula
\begin{equation}\label{representation for heat}
  \widetilde{u}_\varepsilon(x,t)=\Lambda-\int_{B_1}G(x,y,\varepsilon t)\left[\Lambda-\varphi^-(y)\right]dy.
\end{equation}
From this representation and the continuity of $\varphi^-$, we get
\begin{equation}\label{vanishing viscosity limit for linear heat}
  \lim_{\varepsilon\to0}\widetilde{u}_\varepsilon(x,t)=\varphi^-(x), \quad \mbox{uniformly in } B_{2/3}\times[0,T].
\end{equation}

Combining \eqref{comparison with linear} and \eqref{vanishing viscosity limit for linear heat}, we see as $\varepsilon\to0$, $u_{\varepsilon,\ast}^-\to 0$ uniformly in $B_{1/2}\times [0,T]$. 
Substituting this into \eqref{form of g} gives
\[\Delta w_{\varepsilon,\ast}(x,t)-\partial_tw_{\varepsilon,\ast}(x,t)=\varphi(x,0)-\int_{0}^{t}f(u_{\varepsilon,\ast}^+)+o(1), \quad \mbox{in }~~B_{1/2}\times (0,T),\]
where $o(1)$ is measured in $L^\infty$.
Passing to the limit  we obtain
\[\Delta w_\ast(x,t)-\partial_t w_\ast(x,t)=\varphi(x,0)-\int_{0}^{t}f(v_\ast), \quad \mbox{ in }~~B_{1/2}\times (0,T).\]
Taking derivative in $t$ and noting that $v_\ast=\partial_t w_\ast$, we get
\[\partial_t v_\ast-\Delta v_\ast =f(v_\ast)  \quad \mbox{  in }~~B_{1/2}\times (0,T).\]
The above three equations are all understood in the distributional sense. However, by standard parabolic theory, $v_\ast$ is smooth in $B_{1/2}\times (0,T)$.
As before we still have $v_\ast\geq 0$.  By  Harnack inequality,  there exists a constant $\gamma>0$ such that
\begin{equation}\label{strictly positive}
  v_\ast>\gamma \quad \mbox{strictly in }~~B_{1/3}\times [1/16,T].
\end{equation}

{\bf Step 3. Completion of the proof.}  By Section \ref{sec uniform Sobolev}, as $\varepsilon\to0$, $v_{\varepsilon,\ast}$ converges strongly to $v_\ast$ in $L^1_{loc}(Q_1^+)$. Thus for any $(x,t)\in B_{1/4}\times(1/8,T)$,
\[\lim_{\varepsilon\to 0}\int_{Q_{1/16}^{\ast}(x,t)}v_{\varepsilon,\ast}=\int_{Q_{1/16}^{\ast}(x,t)}v_\ast\geq \gamma|Q_{1/16}^\ast|.\]
Combining this estimate with Lemma \ref{lem uniform continuity from above}, we get  an $\varepsilon_\ast$ such that
\[ v_{\varepsilon,\ast}>0 \quad \mbox{in } B_{1/4}\times[1/8,T], \quad \mbox{if } \varepsilon\leq\varepsilon_\ast.\]
The proof is complete by noting that $v_\varepsilon\geq v_{\varepsilon,\ast}$ in $Q^T$.
\end{proof}

\begin{prop}\label{prop uniform continuity}
  As $\varepsilon\to0$, $v_\varepsilon\to u^+$ in $C_{loc}(Q_1^+)$.
\end{prop}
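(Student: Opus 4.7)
The plan is to prove equicontinuity of the family $\{v_\varepsilon\}$ on compact subsets of $Q_1^+$, with a modulus independent of $\varepsilon$, and then to identify the locally uniform limit as (the continuous representative of) $u^+$ using the $L^2_{\mathrm{loc}}$ convergence established in Section \ref{sec uniform Sobolev}. The four preceding lemmas provide essentially all the ingredients; the task is to combine them into an honest oscillation bound on symmetric parabolic cylinders.

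For the equicontinuity, I fix $\eta>0$ small and a point $(x_0,t_0)\in Q_1^+$, and pick a radius $r=r(\eta,\Lambda,L)>0$ to be determined. I argue by dichotomy on $M_\varepsilon:=\sup_{Q_r(x_0,t_0)}v_\varepsilon$. If $M_\varepsilon\le \eta$, then together with the lower bound $v_\varepsilon\ge -\Lambda\varepsilon$ this already gives $\mathrm{osc}_{Q_r}v_\varepsilon\le 2\eta$ for small $\varepsilon$. Otherwise, take a point $(x_\ast,t_\ast)\in Q_r$ with $v_\varepsilon(x_\ast,t_\ast)\ge \eta/2$; Lemma \ref{lem Harnack} then yields $v_\varepsilon\ge \eta/4$ on the backward cylinder $Q^\ast_{\rho(\eta/2)}(x_\ast,t_\ast)$, where $v_\varepsilon=u_\varepsilon^+$ solves the uniformly parabolic equation $\partial_t v_\varepsilon=\Delta v_\varepsilon+f(v_\varepsilon)$, so standard parabolic Hölder theory furnishes a uniform-in-$\varepsilon$ modulus of continuity. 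To convert these local estimates into a bound on the full $Q_r(x_0,t_0)$, Lemma \ref{lem uniform continuity from above} handles every point with $v_\varepsilon\le 0$ by giving $v_\varepsilon\le \eta$ on a backward cylinder of definite size, while Lemma \ref{lem expanding property} controls how fast positivity can newly appear in forward time across the free boundary. A short covering argument then yields the uniform oscillation bound $\mathrm{osc}_{Q_r(x_0,t_0)}v_\varepsilon\le C\eta$.

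With equicontinuity in hand, Arzelà–Ascoli extracts a subsequence of $v_\varepsilon$ converging locally uniformly to a continuous function $v^\ast$ on $Q_1^+$. Since $v_\varepsilon\to u^+$ almost everywhere by Lemma \ref{lem 3.1}, we have $v^\ast=u^+$ a.e., so $v^\ast$ is the unique continuous representative of $u^+$. As every subsequence admits a further subsequence converging uniformly to this same $v^\ast$, the whole sequence $v_\varepsilon$ converges to $u^+$ in $C_{\mathrm{loc}}(Q_1^+)$.

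The hard part, I expect, is the bridging step in the equicontinuity argument: the Harnack-type lower bound of Lemma \ref{lem Harnack} and the upper semicontinuity of Lemma \ref{lem uniform continuity from above} both live on backward cylinders based at specified points, whereas oscillation is measured on a symmetric cylinder $Q_r$. Preventing $v_\varepsilon$ from jumping suddenly from near zero up to a value $\gtrsim \eta$ in forward time—the direction in which the free boundary moves—is the most delicate point and is precisely where the almost-monotonicity property of Lemma \ref{lem expanding property} must be invoked.
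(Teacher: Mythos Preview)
Your approach is correct in outline and uses the same four lemmas, but it takes a genuinely different route from the paper. You aim to establish a uniform-in-$\varepsilon$ modulus of continuity for $v_\varepsilon$ via oscillation bounds on symmetric cylinders, and then invoke Arzel\`a--Ascoli. The paper instead bypasses equicontinuity entirely: it defines the half-relaxed limits
\[
v^\ast(x,t):=\limsup_{\varepsilon\to0,\ (x_\varepsilon,t_\varepsilon)\to(x,t)} v_\varepsilon(x_\varepsilon,t_\varepsilon),
\qquad
v_\ast(x,t):=\liminf_{\varepsilon\to0,\ (x_\varepsilon,t_\varepsilon)\to(x,t)} v_\varepsilon(x_\varepsilon,t_\varepsilon),
\]
and shows $v_\ast=v^\ast$ pointwise, which together with the a.e.\ convergence forces local uniform convergence. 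The case $v^\ast(x,t)=0$ is trivial; when $v^\ast(x,t)=\lambda>0$, one picks a subsequence attaining the $\limsup$, applies Lemma~\ref{lem Harnack} backward and Lemma~\ref{lem expanding property} forward to get $v_{\varepsilon_i}>0$ on a fixed symmetric cylinder $Q_{\rho}(x,t)$, hence smooth convergence there to $u^+$; a Harnack inequality for the limit $u^+$ plus the $L^1$ convergence then upgrades this from the subsequence to the full sequence. This pointwise argument sidesteps exactly the step you flag as the ``hard part'': no oscillation bound on $Q_r(x_0,t_0)$ is ever written down, and the forward/backward asymmetry of the lemmas is handled at a single point rather than via a covering. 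The paper even remarks (after Lemma~\ref{lem Harnack}) that a uniform modulus of continuity is obtainable ``with more work'' but is unnecessary for the qualitative conclusion---your plan is precisely that more work. Your route would buy a quantitative modulus; the paper's buys brevity.
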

\begin{proof}
For any $(x,t)\in Q_1^+$, let
\begin{equation}\label{limsup}
 \left\{ \begin{aligned}
   & v^\ast(x,t):=\limsup_{\varepsilon\to0 \ \ \mbox{and} \ \  (x_\varepsilon,t_\varepsilon)\to(x,t)}v_\varepsilon(x_\varepsilon,t_\varepsilon), \\
   & v_\ast(x,t):=\liminf_{\varepsilon\to0 \ \ \mbox{and} \ \  (x_\varepsilon,t_\varepsilon)\to(x,t)}v_\varepsilon(x_\varepsilon,t_\varepsilon).
   \end{aligned} \right.
\end{equation}
To prove the uniform convergence of $v_\varepsilon$ to $u^+$, in view of the a.e. convergence of $v_\varepsilon$, it is sufficient to show that $v_\ast=v^\ast$ everywhere. Since  we always have $0\leq v_\ast\leq v^\ast\leq \Lambda$, this is trivially true if $v^\ast(x,t)=0$.

It remains to consider the case when $\lambda:=v^\ast(x,t)>0$. Take a subsequence $\varepsilon_i\to0$ and a sequence of points $(x_{\varepsilon_i},t_{\varepsilon_i})\to(x,t)$ to attain the limsup in \eqref{limsup}.
 By Lemma \ref{lem Harnack},  there exists a $\rho:=\rho(\lambda/2)$  such that  $v_{\varepsilon_i}\geq \lambda/4$ in $Q_{\rho}^-(x_{\varepsilon_i},t_{\varepsilon_i})$. Then Lemma \ref{lem expanding property}, applied to $v_{\varepsilon_i}$ in the cylinder $B_\rho(x_{\varepsilon_i})\times [t_{\varepsilon_i}-\rho^2,t_{\varepsilon_i}+\rho^2]$, implies that $v_{\varepsilon_i}>0$ strictly in $Q_{\rho/4}^+(x_{\varepsilon_i},t_{\varepsilon_i})$. Hence for all $\varepsilon_i$ small,
\[\partial_tv_{\varepsilon_i}-\Delta v_{\varepsilon_i}=f(v_{\varepsilon_i}), \quad  \mbox{in } Q_{\rho/6}(x,t).\]
Since $0<v_{\varepsilon_i}<\Lambda$ in $Q_{\rho/6}(x,t)$, standard parabolic regularity theory and Arzela-Ascoli theorem imply that $v_{\varepsilon_i}$
converge to $u^+$ in a smooth way in $Q_{\rho/7}(x,t)$. As a consequence,
\[\partial_tu^+-\Delta u^+=f(u^+), \quad  \mbox{in } Q_{\rho/7}(x,t).\]
By Harnack inequality, there exists a constant $\gamma>0$ such that $u^+\geq \gamma$ in $Q_{\rho/8}(x,t)$.

Since $v_\varepsilon\to u^+$ in $L^1(Q_{\rho/8}(x,t))$, similar to Step 3 in the proof of Lemma \ref{lem expanding property}, we deduce that for all $\varepsilon$ small (before passing to the subsequence), $v_\varepsilon>0$ and converges uniformly to $u^+$ in $Q_{\rho/10}(x,t)$. In particular, $v_\ast(x,t)=v^\ast(x,t)$.
\end{proof}
Because $\|u_\varepsilon^+-v_\varepsilon\|_{L^\infty(Q_1)}\leq \Lambda\varepsilon$, we get
\begin{coro}\label{coro uniform convergence}
   As $\varepsilon\to0$, $u_\varepsilon^+\to u^+$ in $C_{loc}(Q_1^+)$.
\end{coro}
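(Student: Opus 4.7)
The plan is to obtain this corollary as a one-line consequence of Proposition \ref{prop uniform continuity}, by controlling the uniform discrepancy between $u_\varepsilon^+$ and $v_\varepsilon$. First I would unpack the definition $v_\varepsilon := \alpha_\varepsilon(u_\varepsilon)$: on $\{u_\varepsilon \geq 0\}$ we have $v_\varepsilon = u_\varepsilon = u_\varepsilon^+$, while on $\{u_\varepsilon \leq 0\}$ we have $v_\varepsilon = \varepsilon u_\varepsilon = -\varepsilon u_\varepsilon^-$ and $u_\varepsilon^+ = 0$. Consequently $u_\varepsilon^+ - v_\varepsilon$ vanishes on the positive set of $u_\varepsilon$ and equals $\varepsilon u_\varepsilon^-$ on its negative set. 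Combined with the uniform bound \eqref{L infty bound}, this gives the pointwise estimate
\[
\|u_\varepsilon^+ - v_\varepsilon\|_{L^\infty(Q_1^+)} \leq \varepsilon \|u_\varepsilon^-\|_{L^\infty(Q_1^+)} \leq \Lambda\varepsilon,
\]
which is precisely the bound already flagged in the line preceding the corollary.

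With that in hand, for any compact $K \subset Q_1^+$ the triangle inequality yields
\[
\|u_\varepsilon^+ - u^+\|_{L^\infty(K)} \leq \|u_\varepsilon^+ - v_\varepsilon\|_{L^\infty(K)} + \|v_\varepsilon - u^+\|_{L^\infty(K)} \leq \Lambda\varepsilon + \|v_\varepsilon - u^+\|_{L^\infty(K)},
\]
and both terms on the right tend to zero as $\varepsilon \to 0$, the first trivially and the second by Proposition \ref{prop uniform continuity}. There is no genuine obstacle at this stage; all of the substantive analysis---the Caccioppoli estimate and De Giorgi iteration of Lemma \ref{lem De Giorgi iteration}, the oscillation decay of Lemma \ref{lem decay}, the almost-monotonicity of Lemma \ref{lem uniform continuity from above}, the Harnack-type propagation of Lemma \ref{lem Harnack}, and the expansion argument of Lemma \ref{lem expanding property}---has already been carried out to establish the uniform local convergence $v_\varepsilon \to u^+$, and the corollary simply transports this conclusion from $v_\varepsilon$ to $u_\varepsilon^+$ via the elementary $O(\varepsilon)$ comparison above.
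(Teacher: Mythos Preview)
Your proposal is correct and matches the paper's approach exactly: the paper simply invokes the bound $\|u_\varepsilon^+ - v_\varepsilon\|_{L^\infty(Q_1)} \leq \Lambda\varepsilon$ (stated immediately before the corollary) together with Proposition~\ref{prop uniform continuity}. Your unpacking of that $O(\varepsilon)$ bound and the triangle-inequality step are precisely what the paper intends.
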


\section{Completion of  the proof}\label{sec completion of proof}
\setcounter{equation}{0}

Since $v_\varepsilon\to u^+$ in $C_{loc}(Q_1^+)$, $u^+\in C(Q_1^+)$. Hence $\Omega:=\{u>0\}$ is an open subset of $Q_1^+$, and for each $t\in(0,1)$, $\Omega(t):=\{u(t)>0\}$ is an open subset of $B_1$.

\smallskip

{\bf Property 1.} $\partial_tu -\Delta u =f(u )$ and $\bar{f}=f(u)$ in $\Omega$.
\begin{proof}
This follows from the local uniform convergence of $u_\varepsilon^+$ to $u$ in $\Omega$.
\end{proof}

{\bf Property 2.} $\Omega(t)$ is increasing in $t$.
\begin{proof}
Take a point $x\in\Omega(t)$. By the continuity of $u^+$, there exists a cylinder $Q_r(x)$ and a positive constant $\lambda>0$ such that $u\geq 2\lambda $ in $Q_{r}(x)$. By the uniform convergence of $u_\varepsilon^+$, for all $\varepsilon$ small, $u_\varepsilon\geq \lambda$ in $Q_{r}(x)$. Applying Lemma \ref{lem expanding property} to $u_\varepsilon$ in the cylinder $B_r(x)\times(t-r^2,1)$, we see for these $\varepsilon$, $u_\varepsilon>0$ in $B_r(x)\times(t-r^2/4,1)$. Hence
\[\partial_tu_\varepsilon-\Delta u_\varepsilon=f(u_\varepsilon), \quad \mbox{in } B_r(x)\times(t-r^2/4,1).\]
Then we find a constant $\gamma>0$ by  Harnack inequality   such that $u_\varepsilon\geq \gamma$ in $B_{r/2}(x)\times[t,1]$.
 Letting $\varepsilon\to0$ and using the uniform convergence of $u_\varepsilon^+$, we  deduce that $B_{r/2}(x)\times[t,1]\subset \Omega$.
\end{proof}

By this monotonicity property, we obtain a function $T: B_1\mapsto [0,1]$ such that
\[\Omega=\left\{(x,t)\in Q_1^+: t>T(x)\right\},\]
which is the waiting time of $x$. By the continuity of $u^+$, $T$ is an upper semi-continuous function.

{\bf Property 3.} In $B_1\times(h,1)$, $\{w_h>0\}=\Omega$.
\begin{proof}
For any $(x,t)\in Q_1^+\setminus\Omega$, by Property 2,
\[u(x,s)\leq 0, \quad \mbox{a.e. in } (0,t).\]
Therefore Lemma \ref{lem 3.1} and Proposition \ref{prop uniform continuity} together imply that
\[ v_\varepsilon(x,s)\to 0, \quad \mbox{uniformly in }\ s\in[0,t].\]
Then by the definition of $w_{h,\varepsilon}$, we get
 \[w_h(x,s)= 0, \quad \forall s\in[h,t].\]
 In conclusion, $w_h=0$ outside $\Omega$.

 Next we claim that $w_h>0$ in $\Omega$. Indeed, for any $(x,t)\in \Omega$, there exists an open neighborhood of it where $u>0$ strictly. The claim then follows from the fact that $\partial_tw_h=u^+$.
\end{proof}

{\bf Property 4.} For any $0\leq h<1$,
\begin{equation}\label{5.4}
u(x,t)=u_h(x)+\int_{h}^{t}\bar{f}(x,s)ds, \quad \mbox{for a.e. } \ \ (x,t)\in \left[B_1\times(h,1)\right]\setminus \Omega.
\end{equation}
\begin{proof}
By Property 3, $\left[B_1\times(h,1)\right]\setminus \Omega\subset \{w_{h}=0\}$. By results in Section \ref{sec variational inequality}, $g_{h}=0$ a.e. in $\left[B_1\times(h,1)\right]\setminus \Omega$. The conclusion then follows from the formula of $g_h$ in \eqref{representation of g}.
\end{proof}

{\bf Property 5.} For a.e. $h\in(0,1)$,
\[
u(x,h)=u_h(x), \quad \mbox{for a.e. } \ \ x\in  B_1 \setminus \Omega(h).
\]
\begin{proof}
Taking $h=0$ in \eqref{5.4} we obtain
\begin{equation}\label{5.5}
u(x,t)=u_0(x)+\int_{0}^{t}\bar{f}(x,s)ds, \quad \mbox{for a.e. } \ \ (x,t)\in \left[B_1\times(0,1)\right]\setminus \Omega.
\end{equation}
Subtracting \eqref{5.4} from \eqref{5.5} we obtain
\[u_h(x)=u_0(x)+\int_{0}^{h}\bar{f}(x,s)ds \quad \mbox{for a.e. } \ \ x\in  B_1 \setminus \Omega(h).\]
Comparing this with the $t=h$ case of \eqref{5.5} we conclude the proof.
\end{proof}
Combining Property 4 with Property 5 we obtain \eqref{limit eqn in negative}.

\medskip

{\bf Property 6.} For any $(x,t)\in\Omega$,
\begin{equation}\label{eqn w limit}
  \Delta w_0(x,t)-\partial_tw_0(x,t)=-u_0(x)-\int_{0}^{T(x)}\bar{f}(x,s)ds-\int_{T(x)}^{t}f(u(x,s))ds.
\end{equation}
\begin{proof}
This follows by dividing the integral in \eqref{representation of g} (with $h=0$) into two parts: $(0,T(x))$ and $(T(x),t)$, and then applying Property 1 to the latter one.
\end{proof}

Differentiating \eqref{eqn w limit} in $t$ gives the equation for $u^+$, \eqref{limiting eqn}, with $W$ given by \eqref{2.1}.

\medskip

{\bf Property 7.} $\nabla u_\varepsilon^+\to \nabla u^+$ strongly in $L^2_{loc}(Q_1^+)$.
\begin{proof}
  Since $u^+$ is continuous and $\partial_tu-\Delta u=f(u)$ in $\{u>0\}$, for any $\eta\in C_0^\infty(Q_1^+)$ and $k>0$,
  we have
  \begin{equation}\label{5.1}
 \int_{Q_1^+} -|(u-k)^+|^2\eta\partial_t\eta+|\nabla(u-k)^+|^2\eta^2+2\eta (u-k)^+\nabla (u-k)^+\cdot\nabla\eta-f(u)(u-k)_+\eta^2=0.
  \end{equation}
  Letting $k\to0$ we get
   \begin{equation}\label{5.2}
 \int_{Q_1^+} -|u^+|^2\eta\partial_t\eta+|\nabla u^+|^2\eta^2+2\eta u^+\nabla u^+\cdot\nabla\eta-f(u^+)u^+\eta^2=0.
  \end{equation}
  On the other hand, testing \eqref{eqn} with $u_\varepsilon^+\eta^2$ we obtain
   \begin{equation}\label{5.3}
 \int_{Q_1^+} -|u_\varepsilon^+|^2\eta\partial_t\eta+|\nabla u_\varepsilon^+|^2\eta^2+2\eta u_\varepsilon^+\nabla u_\varepsilon^+\cdot\nabla\eta-f(u_\varepsilon^+)u_\varepsilon^+\eta^2=0.
  \end{equation}
 Letting $\varepsilon\to0$ in \eqref{5.3}, by the strong convergence of $u_\varepsilon^+$ and weak convergence of $\nabla u_\varepsilon^+$ in $L^2_{loc}(Q_1^+)$, we obtain
 \begin{eqnarray*}
 \lim_{\varepsilon\to0} \int_{Q_1^+} |\nabla u_\varepsilon^+|^2\eta^2&=&\int_{Q_1^+}  |u^+|^2\eta\partial_t\eta-2\eta u^+\nabla u^+\cdot\nabla\eta+f(u^+)u^+\eta^2\\
 &=&\int_{Q_1^+} |\nabla u^+|^2\eta^2.
 \end{eqnarray*}
 This gives the strong convergence of $\nabla u_\varepsilon^+$ in $L^2_{loc}(Q_1^+)$.
\end{proof}

The proof of Theorem \ref{main result} is complete.

\end{document}